\def\th@plain{%
  \itshape 
}
\renewenvironment{proof}[1][\proofname]{\par
  \pushQED{\qed}%
  \normalfont \topsep6\p@\@plus6\p@\relax
  \trivlist
  \item[\hskip\labelsep
        \bfseries
    #1\@addpunct{.}]\ignorespaces
}{%
  \popQED\endtrivlist\@endpefalse
}
\numberwithin{equation}{section}
\newtheorem{thm}{Theorem}[section]
\newtheorem{claim}{Claim}
\newtheorem{lem}[thm]{Lemma}
\numberwithin{equation}{section}
\numberwithin{equation}{section}
\title{\LARGE Equitable coloring of graphs beyond planarity}
\author{Weichan Liu\\
{\small  School of Mathematics, Shandong University, Jinan, 250100, China}\\
   {\small wcliu@sdu.edu.cn}
}
\begin{document} \baselineskip 0.6cm

\maketitle

\begin{abstract}\baselineskip 0.6cm
An equitable coloring of a graph is a proper coloring where the sizes of any two different color classes do not differ by more than one. A graph is IC-planar if it can be drawn in the plane so that
no two crossed edges have a common endpoint, and is NIC-planar graphs if it can be embedded in the plane in such a way that no two pairs of crossed edges share two endpoints. Zhang proved that every IC-planar graph with maximum degree $\Delta\geq 12$ and every NIC-planar graph with maximum degree $\Delta\geq 13$ have equitable $\Delta$-colorings. In this paper, we reduce the threshold from 12 to 10 for IC-planar graphs and from 13 to 11 for NIC-planar graphs.

\vspace{1em}
\noindent \textbf{Keywords:} equitable coloring; Chen-Lih-Wu Conjecture; IC-planar graph; NIC-planar graph.

\end{abstract}

\section{Introduction}

An equitable $r$-coloring of a graph $G$ is defined as a partition of the vertex set $V(G)$ into $r$ independent subsets $V_1, \ldots, V_r$, such that the difference in cardinalities between any two subsets is at most one. This conceptualization can be visualized as assigning a distinct color $i$ to all vertices within $V_i$, thereby transforming each subset into a color class within the equitable $r$-coloring framework.
 
The concept of equitable graph coloring was formally introduced by Meyer \cite{Meyer1973} in 1973. Its inception was motivated by Tucker's work \cite{doi:10.1137/1015072}, which modeled the problem of optimizing garbage collection routes using graph theory. It is straightforward to demonstrate, using a greedy algorithm, that any graph $G$ admits a proper coloring utilizing at most $\Delta(G) + 1$ colors. This observation inspired Erd\H{o}s \cite{erdos1959} to conjecture in 1959 that every graph $G$ possesses an equitable $r$-coloring for every integer $r \geq \Delta(G) + 1$. This conjecture was subsequently proven by Hajnal and Szemer\'edi \cite{HS1970}. In 2008, Kierstead and Kostochka \cite{zbMATH05302004} provided a simplified proof of the Hajnal-Szemer\'edi theorem, and in 2010, Kierstead, Kostochka, Mydlarz, and Szemer\'edi \cite{zbMATH05881225} presented an algorithmic proof, establishing that such an equitable coloring can be computed in $O(|G|^2 r)$ time.
 
Brooks' theorem, a cornerstone result in graph theory, asserts that if $G$ is neither an odd cycle nor a complete graph, then $\Delta(G)$ colors are sufficient for a proper vertex coloring. Inspired by this, Meyer \cite{Meyer1973} posed the question of whether an equitable analogue of Brooks' theorem holds.

\vspace{2mm}
\noindent \textbf{Equitable Coloring Conjecture:}
\textit{If $G$ is a connected graph that is neither an odd cycle nor a complete graph, then it admits an equitable $\Delta(G)$-coloring.}
\vspace{2mm}

It is important to note that equitable colorings do not necessarily exhibit monotonicity. For example, the complete bipartite graph $K_{3,3}$ can be equitably 2-colored but does not permit any equitable 3-coloring. This observation leads to an intriguing problem: determining the smallest integer $r'$ such that a graph $G$ has an equitable $r$-coloring for every $r \geq r'$. In 1994, Chen, Lih, and Wu \cite{CHEN1994443}  proposed the following conjecture in this regard.
 
\vspace{2mm}
\noindent \textbf{Chen-Lih-Wu Conjecture:}
\textit{If $G$ is a connected graph that is neither an odd cycle, a complete graph, nor a complete bipartite graph of the form $K_{2m+1,2m+1}$, then it has an equitable $r$-coloring for every $r \geq \Delta(G)$.}
\vspace{2mm} 

According to the Hajnal-Szemer\'edi theorem, the Equitable Coloring Conjecture and the Chen-Lih-Wu Conjecture are equivalent. 
To date, these conjectures have been confirmed for graphs $G$ satisfying several conditions on their maximum degree $\Delta(G)$: specifically, when $\Delta(G) \geq |G|/2$ \cite{CHEN1994443}, when $(|G|+1)/3 \leq \Delta(G) < |G|/2$ \cite{chen2014equitable},
when $\Delta(G) \geq |G|/4$ \cite{zbMATH06699287}, when $\Delta(G) = 3$ \cite{CHEN1994443}, or when $\Delta(G) = 4$ \cite{zbMATH06081427}.

In the context of planar graphs $G$ (i.e.,\,graphs that can be embedded in the plane without any edge crossings), 
Zhang and Yap \cite{zbMATH01308943} established that if $\Delta(G) \geq 13$, then $G$ possesses an equitable $\Delta(G)$-coloring. 
Nakprasit \cite{NAKPRASIT20121019} subsequently extended this result to cover the range $9 \leq \Delta(G) \leq 12$. 
Most recently, Kostochka, Lin, and Xiang \cite{zbMATH07840571} further refined the bound by showing that $\Delta(G) \geq 8$ is sufficient.

For non-planar graphs, the exploration of ``Graph Drawing Beyond Planarity'' has emerged as a rapidly advancing research domain, focusing on categorizing and analyzing their geometric representations, with a particular emphasis on identifying and examining prohibited crossing patterns.
Currently, 1-planar graphs have garnered considerable attention as a prominent subclass within the broader category of graphs extending beyond planarity, as detailed in a survey paper by Kobourov, Liotta, and Montecchiani \cite{zbMATH06782829}.
In this context, a graph is defined as \textit{1-planar} if it can be drawn in the plane such that each edge intersects no more than one other edge.

The equitable coloring of 1-planar graphs was initially explored by Zhang \cite{zbMATH06602931}. 
By examining the class $\mathcal{G}$ of graphs $G$ where each subgraph $H$ of $G$ satisfies $e(H) \leq 4v(H) - 8$, which encompasses a broader range of graphs than 1-planar graphs,
he demonstrated that every graph $G \in \mathcal{G}$ with $\Delta(G) \geq 17$ admits an equitable $\Delta(G)$-coloring. 
Subsequently, Zhang, Wang, and Xu \cite{zbMATH06875705} refined the findings specifically for 1-planar graphs, proving that every such graph with
$\Delta(G) \geq 15$ admits an equitable $\Delta(G)$-coloring. Most recently, Cranston and Mahmoud \cite{CRANSTON2025114286} further narrowed the threshold to $\Delta(G) \geq 13$ for 1-planar graphs and proposed adapting their proof to encompass all graphs in $\mathcal{G}$.

To this end, Liu and Zhang \cite{CCXX} introduced a class $\mathcal{G}_{m_1,m_2}$ of graphs $G$ satisfying :
\begin{itemize}
    \item $e(H)\leq m_1 v(H)$ for every subgraph $H$ of $G$,
    \item $e(H)\leq m_2 v(H)$ for every bipartite subgraph $H$ of $G$,
\end{itemize}
where $e(H)$ and $v(H)$ are the number of edges and vertices in $H$.
They showed that if $m_1\leq 1.8m_2$ and $\Delta(G)\geq \frac{2}{1-\beta_*}m_1$,
where $\beta_*$ is a real root of the function
\[ 
\frac{2(1-x)(1+x)^2}{x(2+x)}=\frac{m_1}{m_2},
\]
then $G$ is equitably $r$-colorable for every $r\geq \Delta(G)$.

In this paper, we focus on two subclasses of 1-planar graphs: \textit{IC-planar graphs}, where crossings are independent (i.e.,\,no two crossed edges share a common endpoint), and \textit{NIC-planar graphs}, where crossings are nearly independent (i.e.,\,no two pairs of crossed edges share two endpoints).
IC-planar graphs were initially studied by Kr\'al' and Stacho \cite{zbMATH05814476} under the designation "plane graphs with independent crossings." However, numerous recent studies (e.g., \cite{zbMATH07383905, zbMATH06888432,zbMATH07796524}) have adopted the terminology of IC-planar graphs, as advocated by Zhang \cite{zbMATH06378992}, who also introduced the concept of NIC-planar graphs in the same work. 

Zhang \cite{zbMATH06602931} demonstrated that every IC-planar graph $G$ with maximum degree $\Delta(G) \geq 12$ and every NIC-planar graph $G$ with maximum degree $\Delta(G) \geq 13$ admits an equitable $\Delta(G)$-coloring. It is known that all IC-planar graphs belong to the class $\mathcal{G}_{\frac{7}{2},\frac{9}{4}}$, and all NIC-planar graphs belong to the class $\mathcal{G}_{\frac{18}{5},\frac{5}{2}}$ (refer to the lemmas in the subsequent section for details). Applying the aforementioned result of Liu and Zhang \cite{CCXX}, we are unable to derive tighter bounds than those already established.
However, we can establish the following result through much more careful analyses. 

\begin{thm} \label{thm:main}
    If $G$ is an IC-planar graph with $\Delta(G)\geq 10$ or an NIC-planar graph with $\Delta(G)\geq 11$, then $G$ has an equitable coloring with $r$ colors for every $r\geq \Delta(G)$.
\end{thm}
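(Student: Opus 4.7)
The plan is to argue by contradiction via a vertex-minimum counterexample, following the general architecture of Liu--Zhang but sharpening the analysis to exploit the specific structural properties of IC-planar and NIC-planar graphs. Suppose $G$ is a vertex-minimum counterexample: for some $r\geq\Delta(G)$ with $G$ an IC-planar graph ($\Delta\geq 10$) or an NIC-planar graph ($\Delta\geq 11$), $G$ has no equitable $r$-coloring, but every graph in the same class with fewer vertices does. By Hajnal--Szemer\'edi, $G$ admits an equitable $(r+1)$-coloring $(V_1,\dots,V_{r+1})$; the goal is to convert this into an equitable $r$-coloring by a sequence of Kempe-chain swaps that dissolves one color class into the others.

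First I would establish (or recall) the two sparsity lemmas for the classes in question: every IC-planar subgraph $H$ satisfies $e(H)\leq\tfrac{7}{2}v(H)$ and, when $H$ is bipartite, $e(H)\leq\tfrac{9}{4}v(H)$; every NIC-planar subgraph satisfies $e(H)\leq\tfrac{18}{5}v(H)$ and, when bipartite, $e(H)\leq\tfrac{5}{2}v(H)$. These sparsity facts play two roles: they force $G$ to contain low-degree vertices (at most $\lfloor 2m_1\rfloor=7$ in both settings), and they bound the densities of bipartite subgraphs spanned between pairs of color classes in the chain analysis below.

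Next I would run the standard chain / ``dissolving'' argument. Pick the smallest class $V_{r+1}$ and attempt to reassign each of its vertices into the other $r$ classes while preserving properness and approximate equitability. For a vertex $u\in V_{r+1}$ with $d(u)\leq\Delta\leq r$ there is some class $V_j$ containing no neighbor of $u$; placing $u$ in $V_j$ is the first step. Balancing then forces a vertex of $V_j$ to move, which may trigger further moves. The interaction among moves is encoded by bipartite subgraphs between pairs of classes, whose edge counts are bounded using $m_2$. A pigeonhole / discharging argument on the number of steps in the chain, combined with the Liu--Zhang-style inequality $\Delta\geq\tfrac{2m_1}{1-\beta_\ast}$, is what drives toward the numeric threshold.

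The main obstacle, and the source of the improvement from $12$ to $10$ (and from $13$ to $11$), lies in squeezing extra information out of the IC- and NIC-planarity conditions when estimating the bipartite densities that arise along the chain. The generic bound $m_2$ is not tight locally: for IC-planar graphs the endpoints of crossed edges are all distinct, and for NIC-planar graphs no two crossed edges share two endpoints, so the bipartite subgraphs spanned by a pair of color classes near the neighborhood of a low-degree vertex contain strictly fewer crossings than the global bound predicts. Translating this local restriction into a refined bipartite density bound, propagating it through the chain-termination inequality to obtain a tighter $\beta_\ast$, and then verifying the threshold $\Delta\geq 10$ (resp.\ $\Delta\geq 11$) via a short case analysis on the structure around the dissolved class is where I expect the real technical work to lie; the rest is bookkeeping to ensure the chain terminates and yields the desired equitable $r$-coloring, contradicting the minimality of $G$.
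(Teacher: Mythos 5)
Your proposal has a structural flaw at its very first step and then replaces the actual combinatorial content with placeholders. Starting from an equitable $(r+1)$-coloring supplied by Hajnal--Szemer\'edi and trying to ``dissolve'' one whole color class is not a workable reduction: that class has roughly $n/(r+1)$ vertices, all of which must be redistributed while every surviving class grows, and no mechanism you describe (Kempe swaps, chains of forced moves) is shown to terminate or even to be well-defined for such a global redistribution. The paper instead inducts on the number of \emph{edges}: it deletes a single edge $xy$ with $d_G(x)\le 6$ (using $6$-degeneracy), obtains an equitable $r$-coloring of the smaller graph, and is then left with the far more local task of accommodating the single vertex $x$ when $\varphi(x)=\varphi(y)$, i.e.\ repairing one class that is deficient by exactly one vertex. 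All of the machinery --- the digraph $\mathcal{D}$ of ``movable'' witnesses, accessible classes, the bound $a\le 3$, the strong component of size $\ge r-3$ in $\mathcal{D}[\mathcal{B}]$, solo and nice-solo neighbors, and the weight functions $f_{\mathcal{W}}$ --- exists precisely to move \emph{one} vertex at a time along directed paths of witnesses. Your proposal contains none of these ideas and does not explain how its much harder redistribution problem would be solved.

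Your diagnosis of where the improvement from $12$ to $10$ (resp.\ $13$ to $11$) comes from is also off target. You propose to sharpen the \emph{local} bipartite density near a low-degree vertex and feed a tighter $\beta_\ast$ back into the Liu--Zhang inequality $\Delta\ge\frac{2m_1}{1-\beta_\ast}$; but the paper explicitly states that the Liu--Zhang framework cannot reach the claimed thresholds for these classes, and it uses the global bipartite bounds (Lemmas \ref{lem:size-bipIC} and \ref{lem:size-bipNIC}) unchanged. The gain comes instead from the finer case analysis on $a\in\{1,2,3\}$, the interplay between solo degree and nice solo degree (Claim \ref{claim:q-and-q'}), and the averaging argument via $f_{\mathcal{W}}$ that locates high-solo-degree vertices and derives contradictions with the maximality of $a$. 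As written, your proposal is a research plan whose two load-bearing choices --- the reduction and the source of the numerical improvement --- are both wrong, so it cannot be completed into a proof along the lines you describe.
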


\section{Proof of Theorem \ref{thm:main}}  \label{sec:xxlovecc-2}

Drawing inspiration from the techniques presented in \cite{CRANSTON2025114286} and \cite{zbMATH07840571}, we adapt and integrate them with numerous new ideas to align with our research objectives in this particular context. We begin with some useful lemmas.

\begin{lem} {\rm \cite{zbMATH06137073}}  \label{lem:size-IC}
    Every $n$-vertex IC-planar graph has at most $3.5n-7$ edges.
\end{lem}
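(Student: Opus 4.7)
\textbf{Proof plan for Lemma \ref{lem:size-IC}.} The plan is to bound the number of crossings in any IC-planar drawing of $G$ and then apply the standard edge inequality for planar graphs to the planarization of that drawing. Fix a good IC-planar drawing of $G$ (one in which no two edges cross more than once and no three edges meet at a common interior point; such a drawing always exists without decreasing the edge count) and let $k$ denote the number of crossings. Since an IC-planar drawing is in particular $1$-planar, each edge participates in at most one crossing, so the $2k$ edges incident to crossings are pairwise distinct. The defining IC-planar condition---no two crossed edges share an endpoint---forces these $2k$ edges to form a matching of $G$, whose $4k$ endpoints are therefore pairwise distinct vertices. This gives the key crossing bound $4k \le n$, i.e., $k \le n/4$.

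Next I would build the planarization $G^{\times}$ by inserting a new degree-$4$ vertex at each crossing point, which subdivides each of the two crossing edges into two arcs. The resulting drawing is crossing-free and $G^{\times}$ is simple, so $G^{\times}$ is a simple plane graph with $n+k$ vertices and $m+2k$ edges. Applying the planar edge inequality $m' \le 3n' - 6$ to $G^{\times}$ yields
\[
m + 2k \;\le\; 3(n+k) - 6, \qquad \text{equivalently} \qquad m \;\le\; 3n + k - 6.
\]
Combining this with $k \le n/4$ from the previous paragraph gives $m \le \tfrac{13}{4}n - 6$, and since $\tfrac{13}{4}n - 6 \le \tfrac{7}{2}n - 7$ precisely when $n \ge 4$, the desired inequality $m \le \tfrac{7}{2}n - 7$ follows in this range. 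The tiny remaining cases $n \le 3$ are handled by the trivial inequality $m \le \binom{n}{2}$.

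The argument has essentially one conceptual step---recognizing that the IC-planar condition forces the crossed edges to form a matching, which is what powers the bound $k \le n/4$---and I do not foresee any genuine obstacle. No discharging, structural case analysis, or appeal to deeper facts about planar graphs is required beyond Euler's formula for the planarization.
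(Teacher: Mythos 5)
Your proposal is correct, but there is nothing in the paper to compare it against: Lemma \ref{lem:size-IC} is stated as an imported result from the cited reference and is never proved in this paper. What you have written is essentially the standard argument from that literature, and it is sound. The key step is exactly right: in an IC-planar drawing each crossed edge is crossed only once (1-planarity) and no two crossed edges share an endpoint, so the crossed edges form a matching, giving $4k\le n$; planarizing and applying $m'\le 3n'-6$ then yields $m\le 3n+k-6\le\tfrac{13}{4}n-6$, which is in fact the sharp bound for IC-planar graphs and is stronger than the stated $3.5n-7$ for all $n\ge 4$. Two minor points of hygiene. First, your disposal of the small cases is slightly off: for $n=2$ the claimed inequality $m\le 3.5n-7=0$ is false for a single edge, so the lemma must be read with $n\ge 3$ implicit (harmless here, since the paper only applies it with $n=sr\ge 10$). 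Second, the assertion that the planarization $G^{\times}$ is simple deserves one sentence of justification --- parallel edges cannot arise between two crossing vertices because no edge is crossed twice, nor between a crossing vertex and an original vertex because the four endpoints of a crossing pair are distinct by IC-planarity --- since the inequality $m'\le 3n'-6$ requires simplicity. Neither point is a gap in substance; your argument proves more than the lemma asserts.
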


\begin{lem} {\rm \cite{zbMATH07561382}}  \label{lem:size-bipIC}
    Every $n$-vertex bipartite IC-planar graph has at most $2.25n-4$ edges.
\end{lem}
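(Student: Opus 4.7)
The plan is to combine the classical edge bound for simple bipartite plane graphs with an elementary counting argument built on the IC condition. I would fix an IC-planar drawing of $G$ and let $k$ denote the number of crossings it contains. The defining hypothesis of IC-planarity forces the set of crossed edges to form a matching, so these $2k$ edges are incident with $4k$ pairwise distinct vertices; this already yields the key bound $k\le n/4$.

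Next I would extract a plane bipartite spanning subgraph. Since the paper treats IC-planar graphs as a subclass of $1$-planar graphs, each edge participates in at most one crossing, so the $k$ crossings correspond to $k$ pairwise edge-disjoint pairs of crossed edges. Deleting one edge from each such pair produces a subgraph $G'$ drawn in the plane without crossings; $G'$ is simple and bipartite as a subgraph of $G$, and satisfies $e(G')=e(G)-k$. The standard Euler-formula consequence for simple bipartite plane graphs then supplies $e(G')\le 2n-4$.

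Combining the two estimates gives $e(G)\le 2n-4+k\le 2n-4+\tfrac{n}{4}=\tfrac{9n}{4}-4$, exactly the claimed bound. The only point requiring a moment of care is verifying that deleting one representative edge from each crossing really produces a crossing-free drawing; this is precisely where the $1$-planarity inherited from IC-planarity is used, and it is the only mild obstacle in the argument. I therefore anticipate a very short proof, with no discharging or face-level analysis needed, and I would only separate out small values of $n$ (where the bipartite planar inequality $e\le 2n-4$ is vacuous) if the ambient statement demands it.
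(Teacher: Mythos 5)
Your proposal is essentially correct, but be aware that the paper contains no proof of this lemma to compare against: Lemma~\ref{lem:size-bipIC} is imported verbatim from the cited reference. Judged on its own merits, your argument works. In an IC-planar drawing the crossed edges form a matching, so the $k$ crossings involve $4k$ pairwise distinct vertices and $k\le n/4$; since each edge is crossed at most once, the $k$ crossing pairs are edge-disjoint, and deleting one edge from each pair yields a simple bipartite plane graph on all $n$ vertices with $e(G)-k$ edges, whence $e(G)\le (2n-4)+n/4=2.25n-4$. Two caveats, both of which you already flag, are worth making explicit. First, the step ``each edge participates in at most one crossing'' relies on the $1$-planarity that this paper builds into the definition of IC-planarity; the independent-crossings condition alone would not forbid an edge being crossed twice by two disjoint edges, so you should cite the definition rather than derive this. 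Second, the bipartite planar bound $e\le 2n-4$ requires $n\ge 3$, and the lemma as stated is in fact false for $n=2$ (a single edge has $1>2.25\cdot 2-4$), so the implicit hypothesis $n\ge 3$ must be carried along; this is harmless for the application in the paper, where $n=rs-1$ with $r\ge 10$. Since your elementary count already reproduces the cited constant $2.25n-4$ exactly, no discharging or face-level analysis is needed, as you anticipated.
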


\begin{lem} {\rm \cite{zbMATH06378992}}  \label{lem:size-NIC}
    Every $n$-vertex NIC-planar graph has at most $3.6n-7.2$ edges. 
\end{lem}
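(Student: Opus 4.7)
The plan is to prove the edge bound via Euler's formula applied to the planarization of an NIC-planar drawing, combined with a structural bound on the number of crossings that extracts the coefficient $\tfrac{18}{5}$ from the NIC-planarity condition.

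First I would fix a good NIC-planar drawing $D$ of $G$ and construct its planarization $D^{\times}$ by placing a new vertex of degree $4$ at each of the $c$ crossings. The NIC condition immediately forces that each edge of $G$ is crossed at most once, since two crossings on the same edge would automatically share both of its endpoints, violating the NIC condition. Consequently, $D^{\times}$ is a simple plane graph on $n + c$ vertices and $m + 2c$ edges, and Euler's formula gives $f^{\times} = m + c - n + 2$ faces with $\sum_{F} \ell(F) = 2m + 4c$, each face of length at least $3$. An easy planar reduction also yields $m - c \leq 3n - 6$: delete one of the two edges at each crossing (these deletions are pairwise distinct, since each edge is crossed at most once), producing a crossing-free drawing on the same vertex set. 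After this reduction, the target inequality $m \leq \tfrac{18}{5}n - \tfrac{36}{5}$ is equivalent to the structural bound $c \leq \tfrac{3n-6}{5}$ on the number of crossings.

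The crux is to prove $c \leq \tfrac{3n-6}{5}$, where the full NIC condition enters. The plan is a discharging argument on $D^{\times}$. Assign initial charge $d(v) - 4$ to each vertex (so each crossing vertex starts at $0$, since it has degree $4$) and $\ell(F) - 4$ to each face, with total charge $-8$ by Euler. The NIC condition translates to the statement that any two crossing vertices share at most one common neighbor in $D^{\times}$, which restricts the configurations of crossings near any given original vertex. The discharging rules would shift charge from long faces to triangular faces and then from triangular faces to crossing vertices in a controlled way, using the NIC restriction to bound how much charge each crossing vertex can draw without forcing a forbidden two-shared-endpoint configuration. Summing the final charges then yields the crossing bound and hence the edge bound.

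The main obstacle is tuning the discharging rules so that the coefficient $\tfrac{3}{5}$ appears rather than the 1-planar coefficient $1$. A careful case analysis of the four quadrants around each crossing vertex is likely required: the four incident faces of a crossing can be triangles only when the corresponding \emph{convex hull} edges of the crossing quadrilateral lie in $G$, and the NIC condition forces that two such quadrilaterals are edge-disjoint (since sharing an edge would mean sharing two endpoints). Translating this edge-disjointness into a clean charging inequality, while correctly handling faces that touch two crossings via a common original vertex, is the technical heart of the proof.
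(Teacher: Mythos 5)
The paper does not prove this lemma at all --- it is quoted from Zhang's paper \cite{zbMATH06378992} --- so your attempt can only be judged on its own terms. Your reduction is sound as far as it goes: in a good NIC-planar drawing each edge is crossed at most once, deleting one edge per crossing gives a plane graph and hence $m - c \leq 3n-6$, so the whole lemma follows from the crossing bound $c \leq \tfrac{3n-6}{5}$ (which is indeed tight for the extremal examples). The problem is that you never prove this crossing bound. You announce a discharging argument on the planarization, but you give no concrete rules, and you explicitly flag that ``tuning the discharging rules so that the coefficient $\tfrac{3}{5}$ appears'' and ``translating this edge-disjointness into a clean charging inequality'' are unresolved. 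That is precisely the entire content of the lemma: everything before it (Euler's formula, $m-c\leq 3n-6$) is routine and would equally prove the weaker $1$-planar bound $4n-8$. As written, the proposal is a correct reduction followed by a placeholder, not a proof.

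For what it is worth, the standard way to close the gap needs no discharging. Take an NIC-planar drawing in which every crossing is covered by a kite (augment the drawing if necessary), and let $S$ be the plane skeleton obtained by deleting the $2c$ crossed edges. Each crossing then lies in a quadrilateral face of $S$, and because any two crossing pairs share at most one vertex, these $c$ quadrilateral faces are pairwise edge-disjoint. Counting face lengths, $2e(S) \geq 3\bigl(f(S)-c\bigr) + 4c$ together with Euler's formula gives $e(S) \leq 3n-6-c$, while edge-disjointness of the quadrilaterals gives $e(S) \geq 4c$; combining these yields $c \leq \tfrac{3n-6}{5}$ and hence $m \leq e(S) + 2c \leq 3n-6+c \leq 3.6n-7.2$. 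If you want to keep your planarization framework, you would still need to supply an argument of essentially this strength where your sketch currently says ``the discharging rules would shift charge \ldots in a controlled way.''
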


\begin{lem} {\rm \cite{zbMATH06378992}}  \label{lem:6-ge-NIC}
    Every NIC-planar graph is $6$-degenerate. 
\end{lem}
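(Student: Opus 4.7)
The plan is to show that every NIC-planar graph $G$ contains a vertex of degree at most $6$. Since any subgraph of an NIC-planar graph is itself NIC-planar (it inherits the drawing with some edges simply deleted), $6$-degeneracy is equivalent to this existence statement, so I may work directly with a single graph $G$.

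First I would try the naive edge-count. If $\delta(G) \ge 7$, then $2e(G) \ge 7|V(G)|$, while Lemma~\ref{lem:size-NIC} yields $e(G) \le 3.6|V(G)| - 7.2$. Together these force $|V(G)| \ge 72$, so the direct approach eliminates all small counterexamples but leaves the dense large-$n$ regime untouched. A structural/discharging argument on the planarization is therefore needed.

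For the structural step I would fix an NIC-planar drawing $D$ of $G$ and form the planarization $G^{\times}$ by replacing each crossing with a new degree-$4$ vertex. Writing $V_T$ for the true (original) vertices and $V_C$ for the crossing vertices, one has $|E(G^{\times})| = e(G) + 2|V_C|$ and, by Euler's formula, $|F(G^{\times})| = 2 + e(G) + |V_C| - |V_T|$. Note also $2|V_C| \le e(G)$, since each edge is crossed at most once: otherwise, the two crossing pairs using that edge would share both of its endpoints, violating the NIC condition. Now assign initial charges $\mu(v) = \deg_{G^{\times}}(v) - 4$ and $\mu(f) = |f| - 4$, so that the total charge is $-8$. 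Under the assumption $\delta(G) \ge 7$, each true vertex begins with charge at least $3$, each crossing vertex with charge $0$, and each triangular face with charge $-1$.

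The heart of the proof is a set of discharging rules that transfer charge from true vertices and large faces onto triangular faces and crossing vertices, together with the accompanying case analysis showing that every vertex and face ends with nonnegative final charge, which would contradict the total of $-8$. The NIC-planar hypothesis must enter decisively here: because any two crossing quads share at most one vertex, a given crossing vertex cannot be surrounded by too many triangular faces that in turn touch further crossings, and clusters of adjacent crossing vertices are sharply restricted. The main obstacle I expect is the local analysis around a crossing vertex whose four incident faces are all triangles with true-vertex corners of degree exactly $7$ in $G$; in this tight configuration the true vertices barely supply enough charge, and one must invoke the NIC condition (not merely $1$-planarity, which would permit the extremal pattern) to rule out the worst case and complete the argument.
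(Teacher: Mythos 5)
This lemma is not proved in the paper at all; it is imported verbatim from the cited reference \cite{zbMATH06378992}, so there is no internal argument to compare yours against. Judged on its own terms, your proposal is not a proof but a proof plan whose decisive content is missing. The preliminary steps are correct: the reduction to exhibiting a single vertex of degree at most $6$ (subgraphs of NIC-planar graphs are NIC-planar), the computation showing that the bound $e(G)\leq 3.6\,v(G)-7.2$ combined with $\delta(G)\geq 7$ only forces $v(G)\geq 72$ rather than a contradiction, and the bookkeeping for the planarization (charges $\deg-4$ on vertices and $|f|-4$ on faces summing to $-8$, true vertices starting with charge at least $3$, crossing vertices with $0$, triangles with $-1$) all check out.

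The gap is that the actual discharging argument is never given. You write that ``the heart of the proof is a set of discharging rules \ldots\ together with the accompanying case analysis,'' but you state no rules and verify no cases; you then explicitly identify a configuration (a crossing vertex whose four incident faces are triangles with true corners of degree exactly $7$) as ``the main obstacle I expect'' and assert, without argument, that the NIC condition ``must enter decisively'' to exclude it. That is precisely the step that distinguishes a true theorem from a plausible one: with only charge $3$ per true vertex to spread over up to seven incident faces and crossings, whether the books balance is exactly the question, and it is the place where $1$-planar graphs (which are only $7$-degenerate in general) differ from NIC-planar ones. Until concrete rules are written down and the local analysis around crossing vertices and triangular faces is completed, the proposal does not establish the lemma.
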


\begin{lem} {\rm \cite{zbMATH07561382}}  \label{lem:size-bipNIC}
    Every $n$-vertex bipartite NIC-planar graph has at most $2.5n-5$ edges.
\end{lem}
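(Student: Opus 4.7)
The plan is a minimum counterexample argument coupled with a Kierstead--Kostochka style Kempe-chain extension, applied uniformly across all $r\geq\Delta(G)$. Let $(G,r)$ be a counterexample with $G$ IC-planar ($\Delta(G)\geq 10$) or NIC-planar ($\Delta(G)\geq 11$) and $r\geq\Delta(G)$, chosen with $|V(G)|$ minimum. By adjoining at most $r-1$ isolated vertices (which preserves IC-/NIC-planarity and the value of $\Delta$) we may assume $r$ divides $|V(G)|=:rn$, so an equitable $r$-coloring would partition $V(G)$ into $r$ classes of exactly $n$ vertices. Write $L=\{v:d_G(v)\leq r-1\}$ and $H=V(G)\setminus L$. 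From $2|E(G)|=\sum_v d_G(v)\geq r|H|$ together with Lemma \ref{lem:size-IC} (resp. Lemma \ref{lem:size-NIC}), we get $|H|\leq \frac{2m_1}{r}|V(G)|$ with $m_1=7/2$ or $18/5$, which under the threshold hypothesis produces a linear lower bound on $|L|$.

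The centerpiece is a \emph{reducibility lemma}: no two vertices of $L$ are adjacent. Suppose for contradiction $u,v\in L$ with $uv\in E(G)$. By minimality $G'=G-\{u,v\}$ admits an equitable $r$-coloring $\varphi'$; since $|V(G')|\equiv -2\pmod{r}$, exactly two of its color classes are \emph{deficient} (size $n-1$), the others having size $n$. Extending $\varphi'$ to $G$ amounts to placing $u$ into one deficient class and $v$ into the other without violating properness. Each of $u,v$ has a free color (as $d_G(u),d_G(v)\leq r-1$); the key point is to arrange, possibly after a sequence of Kempe swaps along bichromatic components, that the free colors available to $u$ and to $v$ are precisely the two deficient classes. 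I would mimic the Kierstead--Kostochka shift mechanism from \cite{zbMATH05302004}, iteratively exchanging pairs of color classes along bichromatic paths to realign the deficiencies with the free colors.

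Once $L$ is independent, every edge incident to $L$ lies in the bipartite subgraph $G[L,H]$, which is itself IC-/NIC-planar and bipartite. Lemma \ref{lem:size-bipIC} (resp. Lemma \ref{lem:size-bipNIC}) therefore bounds $\sum_{v\in L}d_G(v)=e(G[L,H])\leq m_2|V(G)|-c$ with $m_2=9/4$ or $5/2$. In the NIC-planar case Lemma \ref{lem:6-ge-NIC} gives $6$-degeneracy of $G$, while in the IC-planar case $6$-degeneracy follows from Lemma \ref{lem:size-IC}; these refinements bring down the contribution of individual low-degree vertices below the crude global average. Comparing the lower bound on $|L|$ with the bipartite upper bound on $\sum_{v\in L}d_G(v)$, and specializing to $r\geq\Delta\geq 10$ (resp. $11$), produces a strict numerical contradiction, and the arithmetic saturates exactly at the stated thresholds.

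The main obstacle will be the reducibility lemma: producing a simultaneous pair of free colors \emph{and} a Kempe-chain sequence that converts an arbitrary equitable $r$-coloring of $G'$ into one whose deficient classes align with the free colors of both $u$ and $v$. The classical Chen--Lih--Wu arguments suffice when only one vertex of sub-maximum degree must be accommodated, but coordinating two vertices simultaneously — particularly in the tight IC-planar regime with $\Delta=10$, where the density margin provided by Lemmas \ref{lem:size-IC}--\ref{lem:size-bipIC} is thin — will require a delicate case analysis based on the exact degree pattern of $u$, $v$, and their shared neighborhood structure, and is what we expect to carry the bulk of the technical work.
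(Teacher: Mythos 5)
Your proposal does not address the statement at all. The statement to be proved is Lemma~\ref{lem:size-bipNIC}, a purely structural density bound: every $n$-vertex \emph{bipartite} NIC-planar graph has at most $2.5n-5$ edges. What you have written is instead a strategy sketch for the paper's main result, Theorem~\ref{thm:main}, on equitable $r$-colorings of IC- and NIC-planar graphs --- minimum counterexamples, deficient color classes, Kempe-chain realignments, and a count of low-degree vertices. None of that machinery is relevant to bounding the edge count of a bipartite NIC-planar graph; indeed your argument \emph{invokes} Lemma~\ref{lem:size-bipNIC} as an ingredient, so as a proof of that lemma it would be circular even if the coloring argument were completed.

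A proof of the actual statement would proceed along entirely different lines: starting from a NIC-planar drawing, one planarizes by replacing each crossing with a dummy vertex (or, alternatively, deletes one edge from each crossing pair to obtain a plane bipartite subgraph), applies Euler's formula using the fact that a bipartite plane graph has no triangular faces (so $e \leq 2v - 4$ for the planar part), and then bounds the number of crossings using the NIC condition that no two crossing pairs share two endpoints. The paper itself does not reprove this lemma --- it is quoted from the cited reference --- but your attempt is a proof of a different theorem, not a different route to this one.
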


\begin{lem} {\rm \cite{CCXX}} \label{love}
Let $m_2\leq m_1\leq 1.8m_2$. If every graph $H$ in $\mathcal{G}_{m_1,m_2}$ with $r \mid v(H)$ is equitably $r$-colorable, then every graph $G\in \mathcal{G}_{m_1,m_2}$ is equitably $r$-colorable.
\end{lem}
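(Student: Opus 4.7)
The plan is to reduce to the divisible case by modifying $G$ so that the vertex count becomes a multiple of $r$. Write $n = v(G) = qr + t$ with $0 \le t < r$; if $t = 0$ the hypothesis applies directly to $G$, so assume $t \ge 1$. A basic fact used throughout is that every graph in $\mathcal{G}_{m_1,m_2}$ is $d$-degenerate for $d := \lfloor 2m_1 \rfloor$, since $e(H) \le m_1 v(H)$ on every subgraph forces minimum degree at most $2m_1$. The argument splits into two regimes: for small $t$ I delete $t$ low-degree vertices and extend the coloring, while for large $t$ I attach a disjoint clique.

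\emph{Small remainder, $1 \le t \le r - d$.} Take a degeneracy ordering $v_1, \ldots, v_n$ so that each $v_i$ has at most $d$ neighbors in $\{v_{i+1}, \ldots, v_n\}$, and let $S = \{v_1, \ldots, v_t\}$. Since $\mathcal{G}_{m_1,m_2}$ is hereditary, $G - S \in \mathcal{G}_{m_1,m_2}$, and $v(G - S) = qr$ is divisible by $r$, so the hypothesis yields an equitable $r$-coloring of $G - S$ with every class of size $q$. I then reinsert $v_t, v_{t-1}, \ldots, v_1$ in this order. When coloring $v_i$, forbidden colors come from two sources: the colors of its already-colored neighbors, which all lie in $\{v_{i+1}, \ldots, v_n\}$ and contribute at most $d$ colors; and the colors previously assigned to $v_{i+1}, \ldots, v_t$, which must be avoided so that the $t$ inserted vertices occupy $t$ distinct classes, contributing at most $t - i$ further colors. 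In total at most $d + (t - i) \le d + t - 1 \le r - 1$ colors are forbidden, so at least one color remains available. Exactly $t$ classes then gain one new vertex, producing $t$ classes of size $q + 1$ and $r - t$ classes of size $q$, an equitable $r$-coloring of $G$.

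\emph{Large remainder, $r - d + 1 \le t \le r - 1$.} Let $s := r - t$, so $1 \le s \le d - 1 \le 2m_1$. Then $\binom{s}{2} \le m_1 s$, and the hypothesis $m_1 \le 1.8 m_2$ implies $s \le 2m_1 - 1 \le 3.6 m_2 - 1 < 4 m_2$, hence $\lfloor s^2/4 \rfloor \le m_2 s$. Therefore every subgraph (respectively every bipartite subgraph) of $K_s$ respects the general (resp.\ bipartite) density bound. Since any subgraph of $G^* := G \cup K_s$ (disjoint union) decomposes as $H \sqcup H'$ with $H \subseteq G$ and $H' \subseteq K_s$, we conclude $G^* \in \mathcal{G}_{m_1,m_2}$. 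Now $v(G^*) = n + s = (q+1)r$ is divisible by $r$, so by hypothesis $G^*$ admits an equitable $r$-coloring with every class of size $q + 1$. Because $K_s$ is a clique, its $s$ vertices receive $s$ distinct colors, so exactly $s$ classes contain one $K_s$-vertex; restricting to $G$ leaves $s$ classes of size $q$ and $t$ classes of size $q + 1$, an equitable $r$-coloring.

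The two ranges $[1, r - d]$ and $[r - d + 1, r - 1]$ jointly cover $\{1, \ldots, r - 1\}$: when $r \le d$ the first range is empty, and every positive $t$ automatically satisfies $s = r - t \le r - 1 \le d - 1$. The remove-and-extend procedure requires $t \le r - d$ so that the forbidden-color bound stays below $r$, while the add-a-clique procedure requires $s \le d - 1$ so that $K_s$ (and hence $G^*$) satisfies the density constraints of $\mathcal{G}_{m_1,m_2}$; these complementary thresholds are the only delicate point, and $m_1 \le 1.8 m_2$ is precisely what certifies the bipartite bound for $K_s$ when $s$ is close to $d$.
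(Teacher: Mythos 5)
The paper does not actually prove this lemma; it imports it verbatim from the cited reference \cite{CCXX}, so there is no in-paper proof to compare against. Judged on its own, your argument is correct and complete. The two-regime reduction is the standard one for statements of this type: for $n = qr+t$ with small $t$ you peel off $t$ vertices along a degeneracy order and greedily re-insert them into distinct classes (the count $d+(t-i)\leq d+t-1\leq r-1$ is right, and the hereditary nature of $\mathcal{G}_{m_1,m_2}$ lets you invoke the hypothesis on $G-S$); for large $t$ you attach a disjoint $K_s$ with $s=r-t\leq d-1$, and the clique forces its vertices into $s$ distinct classes so that deleting them leaves an equitable coloring of $G$. Your density checks for $K_s$ are the essential point and they hold: the extremal subgraphs of $K_s$ on $j$ vertices are $K_j$ and $K_{\lfloor j/2\rfloor,\lceil j/2\rceil}$, whose edge-to-vertex ratios increase with $j$, so it suffices to verify $\binom{s}{2}\leq m_1 s$ (from $s\leq 2m_1-1$) and $\lfloor s^2/4\rfloor\leq m_2 s$ (from $s\leq 2m_1-1\leq 3.6m_2-1<4m_2$, which is exactly where $m_1\leq 1.8m_2$ enters). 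The two ranges of $t$ do cover $\{1,\dots,r-1\}$ in both the $r>d$ and $r\leq d$ cases, so the reduction is airtight; this is almost certainly the same mechanism as in the cited source, and your write-up makes the role of the hypothesis $m_1\leq 1.8m_2$ clearer than the bare statement does.
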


Now we are ready to prove Theorem \ref{thm:main}. In this proof, we do not differentiate between whether $G$ is IC-planar or NIC-planar, except when such distinction is imperative. 

By Lemmas \ref{lem:size-IC}, \ref{lem:size-bipIC}, \ref{lem:size-NIC}, \ref{lem:size-bipNIC}, and \ref{love}, we shall assume that $G$ has $n:=sr$ vertices and prove the theorem by induction on the number of edges. Since $G$ is 6-degenerate by Lemmas \ref{lem:size-IC} and \ref{lem:6-ge-NIC}, we choose an edge $xy$ such that $d_G(x)\leq 6$, then remove it from $G$ and denote the resulting graph by $G'$. Now, by the induction hypothesis, $G'$ has an equitable $r$-coloring $\varphi$. If $\varphi(x)\neq \varphi(y)$, then $\varphi$ is also an equitable $r$-coloring of $G$ and we are done.
So we assume $\varphi(x)=\varphi(y)$.

Now, define $H$ as $G' - x$, and consider the restriction of the coloring $\varphi$ to the graph $H$. 
 This restriction results in an equitable $r$-coloring of $H$, which we continue to denote by $\varphi$. In this coloring, all color classes except the one containing $y$ (which has size $s - 1$ and is referred to as a \textit{small class}) have size $s$. Let $V_1$ denote the color class containing $y$, and let $V_2, \ldots, V_r$ represent the remaining color classes.

 Construct a directed graph $\mathcal{D} := \mathcal{D}(\varphi)$ on the vertex set $\{V_1, V_2, \ldots, V_r\}$ such that the directed edge $\overrightarrow{V_iV_j}$ belongs to the edge set $E(\mathcal{D})$ if and only if there exists a vertex $v \in V_i$ for which $e_H(v, V_j) = 0$. We refer to such a vertex $v$ as \textit{movable} to $V_j$ and state that $v$ \textit{witnesses} the edge $\overrightarrow{V_iV_j}$.
 
If $P := \overrightarrow{V_{j_1}V_{j_2} \cdots V_{j_k}}$ is a directed path in $\mathcal{D}$ and $v_i$ is a vertex in $V_{j_i}$ such that $v_i$ witnesses $\overrightarrow{V_{j_i}V_{j_{i+1}}}$, then \textit{switching witnesses along $P$} involves moving $v_i$ to $V_{j_{i+1}}$ for every $1 \leq i < k$. This operation reduces the size of $V_{j_1}$ by one and increases the size of $V_{j_k}$ by one, while keeping the sizes of the intermediate vertices (color classes) unchanged.
 
For a particular $V_j$, if $j = 1$ or there exists a directed path from $V_j$ to $V_1$, we say that $V_j$ is \textit{accessible}. If $V_j$ is an accessible class and for each accessible class $V_k$ with $k \neq j$ there is a directed path from $V_k$ to $V_1$ in $\mathcal{D} - V_j$, then we refer to such $V_j$ as a \textit{terminal}. For convenience, we also consider $V_1$ as a terminal.

Let $\mathcal{A}$ denote the collection of all accessible classes, and let $\mathcal{B}$ represent the set of remaining classes.
Define
\begin{align*}
    a := a(\varphi) &= |\mathcal{A}|, \\
    b := b(\varphi) &= |\mathcal{B}|.
\end{align*}
It is noted that $a$ is uniquely determined by $\varphi$ and satisfies $a \geq 1$. By selecting an appropriate $\varphi$, we can ensure that $a$ attains its maximum value.
 
Let $A = \{v \in V(H) \mid \exists V_j \in \mathcal{A} \text{ such that } v \in V_j\}$ and $B = \{v \in V(H) \mid \exists V_j \in \mathcal{B} \text{ such that } v \in V_j\}$. Consequently,
\begin{align*}
    |A| &= as - 1, \\
    |B| &= bs.
\end{align*}

\begin{claim} \label{claim:a<=3}
    $a\leq 3$.
\end{claim}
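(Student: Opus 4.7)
The plan is to argue by contradiction, supposing $a(\varphi) \geq 4$, aiming for a contradiction either by exhibiting an equitable $r$-coloring of $G$ (violating the inductive step of the theorem) or by producing another admissible coloring $\varphi'$ of $H$ with $a(\varphi') > a(\varphi)$ (violating the choice of $\varphi$).

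The first step is the standard switching observation. If some accessible class $V_j$ contains no neighbor of $x$ in $G$, pick any directed path $V_j = V_{j_1} \to V_{j_2} \to \cdots \to V_{j_k} = V_1$ in $\mathcal{D}$ and switch witnesses along it. This yields an equitable $r$-coloring of $H$ in which $|V_j| = s-1$, $|V_1| = s$, and every intermediate class retains size $s$. Inserting $x$ into the updated $V_j$, which is still free of $x$-neighbors, completes an equitable $r$-coloring of $G$. Consequently every accessible class contains at least one of the at most $6$ neighbors of $x$, giving the weak bound $a \leq d_G(x) \leq 6$.

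To sharpen $a \leq 6$ to $a \leq 3$, note that $a \geq 4$ together with $\sum_{V_j \in \mathcal{A}} |N_G(x) \cap V_j| \leq 6$ and each summand $\geq 1$ forces, by pigeonhole, some accessible $V_j \neq V_1$ to contain exactly one neighbor $u$ of $x$. I would then show that $u$ cannot be movable to any other accessible class $V_k$: otherwise, using $u$ itself as the first witness in the switching cascade $V_j \to V_k \to \cdots \to V_1$, we move $u$ out of $V_j$ and, since $N_G(x) \cap V_j = \{u\}$, can then place $x$ in $V_j$, producing an equitable $r$-coloring of $G$. Thus $u$ has a neighbor in every accessible class other than $V_j$.

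The main obstacle, and the step requiring the real work, is leveraging maximality of $a$ to rule out the remaining regime where $u$'s non-neighbors (among color classes) lie entirely in $\mathcal{B} \cup \{V_j\}$. The plan is to pick a suitable inaccessible $V_k$ to which $u$ is movable, swap $u$ into $V_k$, and then build a compensating cascade through $\mathcal{D}$, yielding an admissible coloring $\varphi'$ in which every class previously in $\mathcal{A}$ still admits a directed path to $V_1$ (since those paths can be chosen to avoid $u$) while $V_k$, via the newly created outgoing edge, joins the accessible set—contradicting maximality of $a(\varphi)$. The delicate bookkeeping is to verify that the swap and cascade preserve each witness along the paths certifying accessibility of the classes in $\mathcal{A} \setminus \{V_j\}$, while keeping $V_1$ as the small class. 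A case analysis on the position of $V_k$ relative to the arborescence-like structure formed by the paths to $V_1$ should close this out.
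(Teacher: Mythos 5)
Your first step (every accessible class must contain a neighbor of $x$, hence $a\leq d_G(x)\leq 6$) is exactly the paper's opening move. But the passage from $a\leq 6$ to $a\leq 3$ is where your proposal has a genuine gap, and the route you sketch is unlikely to close it. Concretely: (i) you assert you can ``pick a suitable inaccessible $V_k$ to which $u$ is movable,'' but nothing guarantees such a class exists --- $u$ has at most $r-a$ neighbors left for the $r-a$ classes of $\mathcal{B}$, so $u$ may have exactly one neighbor in every inaccessible class and be movable to none of them; (ii) even granting such a $V_k$, moving $u$ from $V_j$ into $V_k$ makes $|V_j|=s-1$ and $|V_k|=s+1$, destroying the invariant that $V_1$ is the unique small class, and the ``compensating cascade'' that would both restore the sizes and certify a directed path from $V_k$ to $V_1$ in the new digraph is precisely the content you leave unproved; (iii) the earlier sub-step also needs care, since the path from $V_k$ to $V_1$ you switch along may pass through $V_j$.

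The more fundamental problem is that your argument is purely local (about one vertex $u$) and never invokes the bipartite edge-density bounds (Lemmas \ref{lem:size-bipIC} and \ref{lem:size-bipNIC}). The paper's proof of the range $4\leq a\leq 6$ is a global count: since no class of $\mathcal{B}$ is accessible, every vertex of $B$ has a neighbor in every class of $\mathcal{A}$, giving $e_H(A,B)\geq a(r-a)s$, which is then played against $e_H(A,B)\leq 2.25n-4$ (resp.\ $2.5n-5$). The numerology is tight --- e.g.\ for $a=4$, $r=10$ one compares $24s$ with roughly $22.5s$ --- so the specific density constants are essential, and no maximality-of-$a$ switching argument that ignores them can establish $a\leq 3$. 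You would need to replace your third paragraph with this (or an equivalent) counting argument.
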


\begin{proof}
Suppose by contradiction that $a\geq 4$. If $a\geq 7$, then $x$ is not adjacent to some class, namely $V_i$, in $\mathcal{A}$ because $d_G(x)\leq 6$. Now, moving $x$ to $V_i$ and switching witnesses along the path from $V_i$ to $V_1$, we obtain an equitable $r$-coloring of $G$.

Therefore, we have $4\leq a\leq 6$. Since each class $V_j\in \mathcal{B}$ is not accessible, for every $u\in V_j$ we have $e_H(u,V_i)\geq 1$ for every $V_i\in \mathcal{A}$. This implies
\[e_H(A,B)\geq |\mathcal{A}|\cdot |B| = abs = a(r-a)s.\]
On the other hand, since the graphs induced by the edges between $A$ and $B$ is bipartite, by Lemmas \ref{lem:size-bipIC} and \ref{lem:size-bipNIC}, we have
\[
e_H(A,B) \leq 
\begin{cases} 
2.25n - 4 = 2.25(rs - 1) - 4, & \text{if } G \text{ is IC-planar}, \\
2.5n - 5 = 2.5(rs - 1) - 5, & \text{if } G \text{ is NIC-planar}.
\end{cases}
\]
Combining the above two inequalities, we have  
\[
\begin{cases} 
2.25r-a(r-a)  \geq 6.25/s, & \text{if } G \text{ is IC-planar}, \\
2.5r-a(r-a)  \geq 7.5/s, & \text{if } G \text{ is NIC-planar}.
\end{cases}
\]
However, given that $r \geq 10$ when $G$ is IC-planar and $r \geq 11$ when $G$ is NIC-planar, this inequality fails to hold in the case where $4 \leq a \leq 6$.
\end{proof}

A subset $\mathcal{C} \subseteq V(\mathcal{D})$ is called a \textit{strong component} if, for any two distinct classes $V_i, V_j \in \mathcal{C}$, there exists a $(V_i, V_j)$-path in $\mathcal{D}$. We demonstrate that the directed subgraph induced by $\mathcal{B}$ contains a large strong component.
 
\begin{claim} \label{claim:strong-component}
There exists a subset $\mathcal{C} \subseteq \mathcal{B}$ such that $|\mathcal{C}| \geq r - 3$ and $\mathcal{C}$ forms a strong component in the subgraph $\mathcal{D}[\mathcal{B}]$.
\end{claim}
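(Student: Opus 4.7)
The plan is to prove Claim \ref{claim:strong-component} by contradiction. Assume that every strong component of $\mathcal{D}[\mathcal{B}]$ has size at most $r-4$. I will derive a contradiction by combining two different lower bounds on edge counts with the bipartite size bounds in Lemmas \ref{lem:size-bipIC} and \ref{lem:size-bipNIC}.

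The first bound is a \emph{component inequality}. Fix any strong component $\mathcal{C}$ of $\mathcal{D}[\mathcal{B}]$ with $|\mathcal{C}|=c$. Because $\mathcal{C}\subseteq \mathcal{B}$ cannot reach $V_1$ in $\mathcal{D}$, and because any arc of $\mathcal{D}$ from $\mathcal{C}$ into a predecessor or incomparable strong component would collapse the condensation, every vertex of $\mathcal{C}$ has a neighbour in every class of $\mathcal{A}$, every predecessor class, and every incomparable class; symmetrically, every vertex in any successor class has a neighbour in each of the $c$ classes of $\mathcal{C}$. Summing these contributions shows that the bipartite subgraph between $\mathcal{C}$ and $V(H)\setminus\mathcal{C}$ carries at least $c(r-c)s$ edges, so Lemma \ref{lem:size-bipIC} (resp.\ Lemma \ref{lem:size-bipNIC}) gives $c(r-c)\leq 2.25\,r-6.25/s$ (resp.\ $c(r-c)\leq 2.5\,r-7.5/s$). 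A direct check shows this fails whenever $c(r-c)>2.25\,r$ (resp.\ $>2.5\,r$), which for $r\geq 10$ (IC) and $r\geq 11$ (NIC) rules out every $c\in\{4,5,\ldots,r-4\}$; hence all surviving strong components have size at most $3$.

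The second bound is a \emph{partition inequality}. Let $\mathcal{Z}\subseteq\mathcal{B}$ be a predecessor-closed union of strong components of size $z$ (meaning no arc of $\mathcal{D}$ enters $\mathcal{Z}$ from $\mathcal{B}\setminus\mathcal{Z}$). Every vertex of $\mathcal{B}\setminus\mathcal{Z}$ then has at least $a$ neighbours in $\mathcal{A}$ and at least $z$ neighbours in $\mathcal{Z}$, so the bipartite subgraph between $\mathcal{B}\setminus\mathcal{Z}$ and $\mathcal{A}\cup\mathcal{Z}$ carries at least $(a+z)(r-a-z)s$ edges. The bipartite lemmas then force $(a+z)(r-a-z)\leq 2.25\,r+O(1/s)$ (resp.\ $\leq 2.5\,r+O(1/s)$), which contradicts the assumption whenever $z$ lies in the quadratic \emph{critical interval} defined by $(a+z)(r-a-z)>2.25\,r$ (resp.\ $>2.5\,r$).

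To close the proof I would build a maximal chain $\emptyset=\mathcal{Z}_0\subsetneq\mathcal{Z}_1\subsetneq\cdots\subsetneq\mathcal{Z}_t=\mathcal{B}$ of predecessor-closed subsets by adjoining one strong component at each step; since every surviving component has size at most $3$, consecutive sizes satisfy $|\mathcal{Z}_{i+1}|-|\mathcal{Z}_i|\leq 3$. Using Claim \ref{claim:a<=3} (so $a\in\{1,2,3\}$) one checks that, for every admissible pair $(r,a)$ with $r\geq 10$ (IC) or $r\geq 11$ (NIC), the critical interval contains at least three consecutive integers in the IC case (four in the NIC case) and lies inside $[1,|\mathcal{B}|-1]$; since the chain's jumps are at most $3$, some $|\mathcal{Z}_i|$ must land in this interval and trigger the partition inequality. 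The main obstacle I anticipate is performing the length-of-interval calculation cleanly at the tight boundary cases $r=10$ (IC) and $r=11$ (NIC) with small $a$, where the critical interval is narrowest and must precisely accommodate a jump of size $3$.
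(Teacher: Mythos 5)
Your proposal is correct, and it follows the same high-level strategy as the paper (count edges across a cut defined by strong components of $\mathcal{D}[\mathcal{B}]$ and contradict Lemmas \ref{lem:size-bipIC} and \ref{lem:size-bipNIC}), but the execution differs in a real way. The paper does not need predecessor-closure, the condensation, or a topological-order chain: it observes that for \emph{any} union $\mathcal{Z}$ of strong components and any pair $V_i\in\mathcal{B}\setminus\mathcal{Z}$, $V_j\in\mathcal{Z}$, at least one of the two arcs between $V_i$ and $V_j$ is absent (otherwise they would lie in one component), and either absence already forces $e_H(V_i,V_j)\geq s$. This makes the direction of arcs irrelevant, so it suffices to produce any union $\mathcal{Z}$ with $4\leq|\mathcal{Z}|\leq r-4$, which the paper gets greedily (a single component if one has size $\geq 4$, else a minimal union of size $\geq 4$, which then has size $\leq 6$); the count $z(r-z)s\leq e_H(Z,A\cup B\setminus Z)$ finishes in one step. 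Your version pays for the directional counting with extra machinery: a first stage ruling out components of size $4$ through $r-4$, then prefixes of a topological order with jumps of size at most $3$, and a boundary check that the critical window for $t=a+z$ (namely $\{4,\dots,r-4\}$, which has $3$ integers at $r=10$ and $4$ at $r=11$) cannot be skipped from a start at $t=a\leq 3$. All of these steps check out (in particular your cut between $\mathcal{B}\setminus\mathcal{Z}$ and $\mathcal{A}\cup\mathcal{Z}$ gives $(a+z)(r-a-z)s$ edges as claimed, and the chain argument does land in the window), so your proof is valid; it is simply longer and more delicate at the tight cases than the paper's direction-free count. One small wording quibble: an arc from $\mathcal{C}$ into an \emph{incomparable} component does not ``collapse the condensation''---it is excluded by the definition of incomparability---but the conclusion you draw from it is still the right one.
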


\begin{proof}
Suppose, to the contrary, that every strong component in $\mathcal{D}[\mathcal{B}]$ has at most $r-4$ classes. 

If there exists a strong component $\mathcal{C}_0$ in $\mathcal{D}[\mathcal{B}]$ such that $|\mathcal{C}_0| \geq 4$, then let $\mathcal{Z} := \mathcal{C}_0$. Otherwise, if every strong component in $\mathcal{D}[\mathcal{B}]$ has at most three classes, then assume that $\mathcal{D}[\mathcal{B}]$ has exactly $N$ strong components $\mathcal{C}_1, \ldots, \mathcal{C}_N$, where $|\mathcal{C}_1| \leq \cdots \leq |\mathcal{C}_N|$. Let $M$ be the smallest integer such that 
\[
|\mathcal{C}_1| + \cdots + |\mathcal{C}_M| \geq 4,
\]
and define $\mathcal{Z} := \bigcup_{i=1}^M \mathcal{C}_i$. 
 If $|\mathcal{Z}| \geq r - 3$, then 
\[
|\mathcal{C}_1| + \cdots + |\mathcal{C}_{M-1}| \geq r - 3 - |\mathcal{C}_M| \geq r - 6 \geq 4,
\]
which contradicts the minimality of $M$. 
Therefore, in either case, we can find a union $\mathcal{Z}$ of some strong components in $\mathcal{D}[\mathcal{B}]$ such that 
\[
4 \leq z := |\mathcal{Z}| \leq r - 4.
\]

Define the set $Z := \{v \in V(H) \mid \exists V_j \in \mathcal{Z} {\rm ~such ~that~} v\in V_j\}$.
For every class $V_i \in \mathcal{B} \setminus \mathcal{Z}$ and every class $V_j \in \mathcal{Z}$, either $\overrightarrow{V_iV_j} \notin E(\mathcal{D})$ or $\overrightarrow{V_jV_i} \notin E(\mathcal{D})$. This implies that for every $v \in V_i$, either $e_H(v, V_j) \geq 1$ or, equivalently, for every $v \in V_j$, $e_H(v, V_i) \geq 1$. In either case, we have
$e_H(V_i, V_j) \geq s$. Consequently,
\begin{equation*}
e_H(Z, \mathcal{B} \setminus \mathcal{Z}) \geq z(b - z)s.
\end{equation*}
Since $\mathcal{Z} \subseteq \mathcal{B}$, for every vertex $v$ in every class $V_i \in \mathcal{Z}$, we have $e_H(v, V_j) \geq 1$ for every $V_j \in \mathcal{A}$. It follows that
\begin{equation*}
e_H(Z, \mathcal{A}) \geq azs.
\end{equation*}

Since the graph induced by the edges between $Z$ and $A\cup B\setminus Z$ is bipartite, combining the above two inequalities, we conclude
\begin{align*} 
      (r-z)zs=(a+b-z)zs=z(b-z)s+azs\leq  e_H(Z,A\cup B\setminus Z)\leq 
      \begin{cases} 
2.25rs, & \text{if } G \text{ is IC-planar}, \\
2.5rs, & \text{if } G \text{ is NIC-planar}.
\end{cases}
\end{align*}
However, given that $r \geq 10$ when $G$ is IC-planar and $r \geq 11$ when $G$ is NIC-planar, this inequality fails to hold in the case where $4 \leq z \leq r-4$.
\end{proof}

Consider an edge $uv$ such that $u \in V_i \in \mathcal{A}$ and $v \in V_j \in \mathcal{B}$. 
If $e_H(v, V_i) = 1$ (i.e., $u$ is the unique neighbor of $v$ in $V_i$), then we define the following:
\begin{itemize}
    \item $uv$ is a \textit{solo edge}.
    \item $u$ is a \textit{solo vertex}.
    \item $v$ is a \textit{solo neighbor} of $u$.
\end{itemize}
Furthermore, if there are two solo edges $uv$ and $uw$, where $u \in \mathcal{A}$ and $v, w \in \mathcal{B}$, such that $vw \notin E(H)$, then we define:
\begin{itemize}
    \item $v$ and $w$ are \textit{nice solo neighbors} of $u$.
\end{itemize}
If $uv$ is a solo edge with $u \in V_i \in \mathcal{A}$, then $V_i + v - u$ remains an independent set. This property will be frequently utilized in subsequent arguments.
 
Consider a vertex $u \in \mathcal{A}$. We define the following:
\begin{itemize}
    \item Let $Q(u)$ denote the set of \textit{solo neighbors} of $u$.
    \item Let $Q'(u)$ denote the set of \textit{nice solo neighbors} of $u$.
    \item Define $q(u) = |Q(u)|$ as the \textit{solo degree} of $u$.
    \item Define $q'(u) = |Q'(u)|$ as the \textit{nice solo degree} of $u$.
\end{itemize}

\begin{claim}\label{claim:solo-at-least-2-neighbors}
Let $v\in V_i\in \mathcal{A}$. If $i=1$ or there is a vertex $v'\in V_i\setminus \{v\}$ such that $v'$ is movable to $V_1$, then for each $V_j\in \mathcal{B}$ containing a nice solo neighbor of $v'$, 
$v'$ has at least two neighbors in $V_j$.
\end{claim}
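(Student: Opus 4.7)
I argue by contradiction, exploiting the maximality of $a=a(\varphi)$. Set $u:=v$ when $i=1$ and $u:=v'$ otherwise. Suppose for contradiction that there is $V_j\in\mathcal{B}$ containing a nice solo neighbor $w\in V_j$ of $u$ such that $w$ is the only neighbor of $u$ in $V_j$. By the definition of ``nice solo'', pick a second solo neighbor $w'\in V_{j'}\in\mathcal{B}$ of $u$ satisfying $ww'\notin E(H)$.

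The goal is to construct an equitable $r$-coloring $\varphi^*$ with $a(\varphi^*)>a$, contradicting maximality. The central operation is the swap exchanging $u$ and $w$ between $V_i$ and $V_j$: setting $V_i^*:=V_i-u+w$ and $V_j^*:=V_j-w+u$ produces two independent sets---the first by the solo property ($u$ is the unique neighbor of $w$ in $V_i$), the second by the contradiction hypothesis---so $\varphi^*$ remains an equitable $r$-coloring with $V_1$ still the small class. The pivotal observation is that $w'$ becomes movable to $V_i^*$: by the solo property of $w'$, its unique original neighbor in $V_i$ was $u$, which has been removed, and the new vertex $w$ satisfies $ww'\notin E(H)$. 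Hence $\overrightarrow{V_{j'}V_i}\in E(\mathcal{D}^*)$.

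In the subcase $i=1$, $V_i=V_1$ is the small class, so $V_{j'}\in\mathcal{A}^*$ immediately. In the subcase $i\neq 1$, the movability of $v'=u$ to the untouched $V_1$ is preserved, so $v'\in V_j^*$ witnesses $\overrightarrow{V_jV_1}\in E(\mathcal{D}^*)$ and $V_j\in\mathcal{A}^*$; then $V_{j'}$ reaches $V_1$ either through the preserved accessibility of $V_i$ (via some vertex in $V_i\setminus\{v'\}$ originally movable to $V_1$, via $w$ if it turns out to have no neighbor in $V_1$, or along the route $V_i\to V_j\to V_1$) or via an alternative re-routing that uses the newly accessible $V_j$.

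The main obstacle lies in the bookkeeping needed to establish $a(\varphi^*)>a$. One must check that every class previously in $\mathcal{A}$ remains accessible in $\mathcal{D}^*$ (with the possible exception of $V_i$ in the subcase $i\neq 1$), and that the newly accessible classes $V_{j'}$ (and, in the second subcase, $V_j$) together more than compensate for any lost class. The only witnesses in $\mathcal{D}$ that can be broken by the swap are those equal to $u$ or $w$; the solo properties of $w$ and $w'$ (uniqueness of $u$ as their neighbor in $V_i$) supply replacement witnesses in each affected class, completing the verification and the contradiction.
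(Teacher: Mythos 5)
Your core construction is the same as the paper's: assume $e_H(u,V_j)=1$, pick the second nice solo neighbor $w'\in V_{j'}$ with $ww'\notin E(H)$, swap $u$ and $w$ (both new classes are independent for exactly the reasons you give), and observe that $w'$ becomes movable to $V_i^{*}$. Up to that point the argument matches the paper. The gap is in the step you yourself flag as "the main obstacle": establishing $a(\varphi^{*})>a$. Your supporting assertion --- that the only witnesses that can be broken by the swap are those equal to $u$ or $w$ --- is false. A witness is a vertex $p\in V_k$ with $e_H(p,V_l)=0$ for some target class $V_l$; since $V_i$ gains $w$ and $V_j$ gains $u$, \emph{every} witness of \emph{every} edge directed into $V_i$ or $V_j$ is at risk (it may be adjacent to the incoming vertex), not just the two moved vertices. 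The solo properties of $w$ and $w'$ constrain only the adjacencies of $w$ and $w'$ themselves and supply no replacement witnesses for such edges, so "every class previously in $\mathcal{A}$ remains accessible" is not verified and is not true in general. A second concrete problem is your subcase $i\neq 1$: you move out of $V_i$ precisely the vertex whose movability to $V_1$ certified $V_i\in\mathcal{A}$, and none of your proposed repairs is guaranteed ($w$ may well have a neighbor in $V_1$, and there is no reason for an edge $\overrightarrow{V_i^{*}V_j^{*}}$ to exist --- indeed $u\in V_j^{*}$ is adjacent to $w\in V_i^{*}$, so $u$ cannot even witness the reverse edge).

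The paper sidesteps both difficulties by distributing the roles differently: the vertex owning the nice solo neighbors (its $v'$) is the one swapped into $V_j$, while a \emph{different} vertex $v$ of $V_i$, movable to the untouched $V_1$, stays behind and certifies $V_i^{*}\in\mathcal{A}^{*}$ unconditionally. It then closes not by preserving the whole of the old $\mathcal{A}$ but by exhibiting four specific accessible classes in the new coloring ($V_1$, $V_i-v'+w$ via $v$, $V_{j'}$ via the second solo neighbor, and $V_j-w+v'$) and invoking Claim \ref{claim:a<=3}, an absolute bound that makes the survival of other old classes irrelevant. (The statement of the claim has the labels $v$ and $v'$ garbled, which likely pushed you toward moving the wrong vertex; but as written your proposal does not yield the required contradiction without substantial additional bookkeeping that the asserted "only $u$ or $w$" principle cannot supply.)
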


\begin{proof}
Assume $w \in Q'(v') \cap V_j$. We claim that $v'$ has another neighbor in $V_j$ besides $w$. Suppose by contradiction that $e_H(v',V_j)=1$.
Since $w$ is a nice solo neighbor of $v'$, there is another nice solo neighbor $z$ of $v'$ in some other class $V_{j'}$ with $j'\neq j$ in $\mathcal{B}$ such that $wz \notin E(H)$. In this case, we move $v'$ to $V_j$ and $w$ to $V_i$. This constructs a new equitable $r$-coloring with the unique small class still being $V_1$. Now, $V_j\cup \{v'\} \setminus \{w\}$ is accessible to $V_i\cup \{w\} \setminus \{v'\}$ because of $v'$, $V_{j'}$ is also accessible to $V_i\cup \{w\} \setminus \{v'\}$ by the fact that $v'z$ is an solo edge and $zw\not\in E(H)$, and $V_i\cup \{w\} \setminus \{v'\}$ is accessible to $V_1$ because of $v$. This implies that the new $a$ value of the coloring is at least 4, contradicting the maximality of $a$ by Claim \ref{claim:a<=3}.
\end{proof}

\begin{claim}\label{claim:position-of-nicesolo-neighbor}
Let $v\in V_i\in \mathcal{A}$. If $i=1$ or there is a vertex $v'\in V_i\setminus \{v\}$ such that $v'$ is movable to $V_1$, then $Q'(v)\subseteq \mathcal{B} \setminus \mathcal{C}$ provided that there is a class in $\mathcal{C}$ that does not containing any neighbor of $v$.
\end{claim}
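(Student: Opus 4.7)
The plan is to argue by contradiction, mirroring the swap argument from Claim~\ref{claim:solo-at-least-2-neighbors}. I would suppose that some nice solo neighbor $w$ of $v$ lies in a class $V_j\in\mathcal{C}$, fix a class $V_k\in\mathcal{C}$ having no neighbor of $v$ (guaranteed by the hypothesis), and let $z\in V_{j'}$ (with $j'\neq j$) be the nice-solo partner of $w$ at $v$, so that $vz$ is a solo edge and $zw\notin E(H)$. The target is to construct a new equitable $r$-coloring $\varphi'$ of $H$ with $a(\varphi')\geq 4$, contradicting $a\leq 3$ from Claim~\ref{claim:a<=3} together with the choice of $\varphi$ maximizing $a$.

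The move is the elementary swap exchanging $v$ and $w$: I would transfer $v$ from $V_i$ into $V_j$ and simultaneously $w$ from $V_j$ into $V_i$. Since $vw$ is a solo edge, $w$ is the only neighbor of $v$ in $V_j$ and $v$ is the only neighbor of $w$ in $V_i$, so the resulting $\varphi'$ is proper; all class sizes are preserved, so the unique small class of $\varphi'$ is $V_1$ when $i\neq 1$ and $V_1\cup\{w\}\setminus\{v\}$ when $i=1$. Setting $V_i':=V_i\cup\{w\}\setminus\{v\}$ and $V_j':=V_j\cup\{v\}\setminus\{w\}$ (all other classes being unchanged), I would then identify four accessible classes in $\mathcal{D}(\varphi')$: (i) the small class is accessible by definition; (ii) $V_i'$ is accessible because it either coincides with the small class ($i=1$) or the witness $v'\in V_i\setminus\{v\}$ is still movable to the unchanged $V_1$ ($i\neq 1$); (iii) $V_{j'}$ is accessible because $z$ has no neighbor in $V_i'=V_i\setminus\{v\}\cup\{w\}$ (the solo edge $vz$ rules out neighbors in $V_i\setminus\{v\}$, and $zw\notin E(H)$ rules out adjacency to $w$), giving the edge $\overrightarrow{V_{j'} V_i'}$; and (iv) $V_j'$ is accessible because $v\in V_j'$ witnesses $\overrightarrow{V_j' V_k}$ (as $V_k$ contains no neighbor of $v$), provided that $V_k$ is itself accessible in $\varphi'$.

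The hard part will be item (iv), namely certifying that $V_k$ becomes accessible in $\varphi'$. The key observation is that under the swap only $V_i$ and $V_j$ are modified as vertex sets, so the induced subdigraph of $\mathcal{D}(\varphi')$ on the remaining classes is inherited verbatim from $\mathcal{D}(\varphi)$; in particular the strong component $\mathcal{C}$ from Claim~\ref{claim:strong-component} (with $|\mathcal{C}|\geq r-3\geq 7$) keeps all its internal edges on $\mathcal{C}\setminus\{V_j\}$. I plan to exploit this persistence to route a path from $V_k$ to a newly accessible class --- most naturally to $V_{j'}$ when $V_{j'}\in\mathcal{C}$, via an old $V_k\to V_{j'}$ path in $\mathcal{D}[\mathcal{C}]$ that, if it traverses $V_j$, can be rerouted using the redundancy of the large strong component; along any edge into $V_j'$ that cannot be avoided, the old witnesses sitting in $V_k$ remain valid because they are automatically not adjacent to $v$ (since $V_k$ has no neighbor of $v$), while edges out of $V_j'$ continue to be witnessed by the surviving vertices of $V_j\setminus\{w\}$. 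Once $V_k\in\mathcal{A}(\varphi')$ is established, so is $V_j'$, and together with $V_1'$, $V_i'$, $V_{j'}$ this produces $a(\varphi')\geq 4$, the desired contradiction with Claim~\ref{claim:a<=3}.
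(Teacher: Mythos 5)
Your proof breaks down at its very first move. You assert ``Since $vw$ is a solo edge, $w$ is the only neighbor of $v$ in $V_j$ and $v$ is the only neighbor of $w$ in $V_i$,'' but the solo-edge condition is \emph{asymmetric}: $vw$ being solo means $e_H(w,V_i)=1$, i.e.\ $v$ is the unique neighbor of $w$ in the class $V_i\in\mathcal{A}$; it says nothing about how many neighbors $v$ has in $V_j$. In fact Claim~\ref{claim:solo-at-least-2-neighbors}, proved under exactly the same accessibility hypothesis as the present claim, shows that $v$ has \emph{at least two} neighbors in $V_j$ whenever $V_j$ contains a nice solo neighbor of $v$. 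Hence $V_j\cup\{v\}\setminus\{w\}$ is not independent and your swap does not produce a proper coloring. (In the proof of Claim~\ref{claim:solo-at-least-2-neighbors} the analogous swap is legal only because $e_H(v',V_j)=1$ is the contradiction hypothesis there; you have transplanted the move without the hypothesis that makes it work.) This is precisely the role of the assumption that some class $V_\ell\in\mathcal{C}$ contains no neighbor of $v$: it supplies a legal destination for $v$. The paper's proof moves $v$ to $V_\ell$, moves $w$ to $V_i$ (legal, since $v$ was $w$'s unique neighbor there), and then restores all class sizes by switching witnesses along a directed path $P$ from $V_\ell$ to $V_j$ inside the strong component $\mathcal{C}$, chosen to avoid $V_{j'}$. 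The small class remains $V_1$, $V_{j'}$ becomes accessible via $z$ (movable to the new $V_i$ because $vz$ is solo and $zw\notin E(H)$), and the new $V_i$ is still accessible to $V_1$, which raises $a$ and gives the contradiction.

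Two further remarks. First, you require $j'\neq j$ for the nice-solo partner $z$ of $w$, but the definition only demands $z,w\in B$ with $zw\notin E(H)$, so $z$ may lie in the same class $V_j$ (the paper explicitly notes $j'=j$ is feasible); your argument should not exclude this. Second, your item (iv) --- the delicate rerouting needed to certify that $V_k$, and hence $V_j'$, is accessible after the swap --- evaporates once the correct move is used: the witness-switch along $P$ already absorbs the size changes, so no accessibility of $V_j$ or $V_\ell$ is required, and the single new accessible class $V_{j'}$ suffices.
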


\begin{proof}
Let $V_\ell$ be the class in $\mathcal{C}$ that does not containing any neighbor of $v$.
Suppose by contradiction that $v'$ has a nice solo neighbor $w$ in some class $V_j$ in $\mathcal{C}$. 
Let $z\in V_{j'}$ (where $j'=j$ is feasible) be another nice solo neighbor of $v'$ such that $zw\not\in E(H)$.  
Since $\mathcal{C}$ induces a strong component, there is a directed path $P$ from $V_\ell$ to $V_j$ in $\mathcal{D}[\mathcal{B}]$. 
We assume that $P$ does not pass through $V_{j'}$ when $V_{j'} \in \mathcal{C}$, as exchanging $V_j$ and $V_{j'}$ would be possible otherwise.

Now, we move $v$ to $V_\ell$, $w$ to $V_i$, and switch witness along $P$. This gives an equitable $r$-coloring with the unique small class still being $V_1$. 
Since $vz$ is a solo edge and $zw\not\in E(G)$, $V_{j'}$ is accessible to $V_i\cup \{w\} \setminus \{v'\}$. Because of $v$, $V_i\cup \{w\} \setminus \{v'\}$ is still accessible to $V_1$.
Therefore, the new $a$ value is at least one greater than its preceding value, a contradiction to the maximality of $a$.
\end{proof}

Next, We establish the relationship between the solo degree and nice solo degree as follows.

\begin{claim} \label{claim:q-and-q'}
If $G$ is IC-planar and $q(u)\geq 6$, or $G$ is NIC-planar and $q(u)\geq 7$, then $q'(u)\geq q(u)-2$.
\end{claim}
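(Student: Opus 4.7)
The plan is to argue by contradiction: if $q'(u) < q(u) - 2$, then $u$ has at least three solo neighbors that fail to be nice, and I will show this forces the subgraph $G[\{u\} \cup Q(u)]$ to violate Lemma \ref{lem:size-IC} or Lemma \ref{lem:size-NIC}. Unpacking the definition of nice solo neighbor, a solo neighbor $v$ of $u$ fails to be nice precisely when $vw \in E(H)$ holds for every other solo neighbor $w$ of $u$; equivalently, each element of $B := Q(u) \setminus Q'(u)$ is adjacent in $H$ to every other vertex of $Q(u)$.

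Write $Q(u) = \{v_1, \ldots, v_k\}$ with $k = q(u)$ and suppose toward a contradiction that $|B| \geq 3$. After relabeling, assume $v_1, v_2, v_3 \in B$. By the preceding observation, $v_1, v_2, v_3$ are pairwise adjacent in $H$, and each of them is adjacent in $H$ to every vertex of $\{v_4, \ldots, v_k\}$. I then count edges in $G'' := G[\{u, v_1, \ldots, v_k\}]$, a subgraph of $G$ on $k + 1$ vertices. The $k$ solo edges $uv_1, \ldots, uv_k$, the $\binom{3}{2} = 3$ edges inside $\{v_1, v_2, v_3\}$, and the $3(k-3)$ edges between $\{v_1, v_2, v_3\}$ and $\{v_4, \ldots, v_k\}$ are pairwise distinct, so $e(G'') \geq 4k - 6$.

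The remaining step, where the hypothesis on $q(u)$ is actually consumed, is to compare this with the known density bounds. Since a subgraph of an IC-planar (resp.\ NIC-planar) graph is IC-planar (resp.\ NIC-planar), Lemma \ref{lem:size-IC} yields $e(G'') \leq 3.5(k+1) - 7 = 3.5k - 3.5$ in the IC-planar case, and Lemma \ref{lem:size-NIC} yields $e(G'') \leq 3.6(k+1) - 7.2 = 3.6k - 3.6$ in the NIC-planar case. Combining with $e(G'') \geq 4k - 6$ forces $k \leq 5$ and $k \leq 6$ respectively, each of which contradicts the hypothesis $q(u) \geq 6$ or $q(u) \geq 7$. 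Hence $|B| \leq 2$, i.e., $q'(u) \geq q(u) - 2$. I do not foresee any real obstacle here; the only sanity checks are that $u \notin Q(u)$ (immediate since $u \in A$ while $Q(u) \subseteq B$) and that the $v_j$ are pairwise distinct (since $Q(u)$ is a set), both of which ensure that the vertex count $k+1$ and the edge count $4k-6$ are correct.
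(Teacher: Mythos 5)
Your proposal is correct and follows essentially the same route as the paper: both derive the lower bound $e\bigl(H[\{u\}\cup Q(u)]\bigr)\geq 4q(u)-6$ from the fact that every non-nice solo neighbor is adjacent to all other solo neighbors, and then compare with Lemmas \ref{lem:size-IC} and \ref{lem:size-NIC} to force $q(u)\leq 5$ (IC-planar) or $q(u)\leq 6$ (NIC-planar). The only cosmetic difference is that you count the guaranteed edges directly from three non-nice solo neighbors, whereas the paper counts all pairs minus the possible non-edges inside $Q'(u)$; both yield the identical bound.
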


\begin{proof}
Suppose $q'(u)\leq q(u)-3$.
For every pair of $v \in Q(u) \setminus Q'(u)$ and $w \in Q(u)$, $vw \in E(H)$ by the definition of nice solo neighbors. 
Counting edges in the subgraph of $H$ induced by $u\cup Q(u)$, we have
\begin{align*} 
    e(H[u\cup Q(u)]) &=\binom{q(u)+1}{2}-e(\overline{H[Q'(u)]})\\
                     &\geq \binom{q(u)+1}{2}-\binom{q'(u)}{2}\geq \binom{q(u)+1}{2}-\binom{q(u)-3}{2}=4q(u)-6.
\end{align*}
On the other hand, by Lemmas \ref{lem:size-IC} and \ref{lem:size-NIC},
\[
e(H[u\cup Q(u)]) \leq 
\begin{cases} 
3.5(q(u)+1) - 7, & \text{if } G \text{ is IC-planar}, \\
3.6(q(u)+1) - 7.2, & \text{if } G \text{ is NIC-planar}.
\end{cases}
\]
Combining the above two inequalities together, we conclude that
\[
q(u) \leq 
\begin{cases} 
5, & \text{if } G \text{ is IC-planar}, \\
6, & \text{if } G \text{ is NIC-planar}.
\end{cases}
\]
This completes the proof.
\end{proof}

Fix a class $V_i\in\mathcal{A}$ and a subset $\mathcal{W}\subseteq\mathcal{B}$ (with the possibility that $\mathcal{W}=\emptyset$). Define $W = \{v\in V(H)\mid\exists V_j\in\mathcal{W}  {\rm ~such ~that~} v\in V_j\}$.
For each edge $uv$ where $u\in V_i\in\mathcal{A}$ and $v\in B$, define its weight with respect to $\mathcal{W}$:
\begin{align*}
f_{\mathcal{W}}(uv):=
\begin{cases}
\frac{0.5}{e_H(v,V_i)}&\text{if }v\in W,\\
\frac{1}{e_H(v,V_i)}&\text{if }v\in B\setminus W.
\end{cases}
\end{align*}
For every $u\in A$, let
\[
f_{\mathcal{W}}(u)=\sum_{v\in N_B(u)}f_{\mathcal{W}}(uv).
\]
It immediately follows
\begin{align} \label{eq:sum}
   \notag \sum_{u\in V_i}f_{\mathcal{W}}(u)=\sum_{u\in V_i}\sum_{v\in N_B(u)}f_{\mathcal{W}}(uv)&=\sum_{\substack{ u\in V_i \\ v \in B \setminus W \\ uv \in E(H)  }}f_{\mathcal{W}}(uv)+\sum_{\substack{ u\in V_i \\ v \in  W \\ uv \in E(H) }} f_{\mathcal{W}}(uv)\\
 &= |B\setminus W|+0.5|W|=|B|-0.5|W|.
\end{align}
The rest of the proof is divided into three distinct parts according to the value of $a$.

\vspace{3mm}
\noindent \textbf{Case 1:} $a=3$
\vspace{3mm}

Let $\mathcal{A}=\{V_1,V_2,V_3\}$ and $\mathcal{B}=\{V_4,V_5,\ldots,V_r\}$.
We may assume that $\overrightarrow{V_3V_1},\overrightarrow{V_2V_1}\in E(\mathcal{D})$. If it is not the case, then assume, without loss of generality, that $\overrightarrow{V_3V_2},\overrightarrow{V_2V_1}\in E(\mathcal{D})$. Since $\overrightarrow{V_2V_1}\in E(\mathcal{D})$, there is a vertex $u\in V_2$ that are movable to $V_1$. Now, let $V_1:=V_2 \setminus \{u\}$ and $V_2:=V_1\cup \{u\}$. It follows that  $\overrightarrow{V_3V_1},\overrightarrow{V_2V_1}\in E(\mathcal{D})$. 

Let $v\in V_2$ be a vertex movable to $V_1$. If $v$ has a solo neighbor $u$ in some class in $\mathcal{B}$, namely $V_4$, then we move $u$ to $V_2$, and $v$ to $V_1$. We come to a new equitable $r$-coloring whose unique small class is $V_4\setminus \{u\}$. Since $\mathcal{D}[\mathcal{B}]$ is a strong component by Claim \ref{claim:strong-component}, $V_5,\ldots,V_r$ are all accessible to $V_4\setminus \{u\}$. Hence, the new $a$ value of this equitable $r$-coloring is at least $r-3$, bigger than 3, contradicting the maximality of $a$. This implies $q(v)=0$ and that every vertex $u\in V_2$ with $q(u)\geq 1$ has at least one neighbor in $V_1$. Furthermore, $f_{\emptyset}(v) \leq \frac{1}{2} d_{H}(v)\leq \frac{1}{2}r<r-3$ because every neighbor of $v$ in $B$ has at least two neighbors in $V_2$.

Since 
\[\frac{\sum_{u\in V_2} f_{\emptyset}(u)}{|V_2|}=\frac{s(r-3)}{s}=r-3\]
by \eqref{eq:sum},
there exists $v'\in V_2$ such that \[f_{\emptyset}(v')>r-3.\] This implies that $v'$ is not movable to $V_1$, and thus $v'$ has at least one neighbor in $V_1$. On the other hand,
\[f_{\emptyset}(v')\leq \frac{1}{2}\bigg(d_H(v')-1-q(v')\bigg)+q(v') \leq \frac{1}{2} (q(v')+r-1).\]
It follows $q(v')>r-5$. If $G$ is IC-planar, then $q(v')\geq 6$; and if $G$ is NIC-planar, then $q(v')\geq 7$. In each case, we have $q'(v')\geq 4$ by Claim \ref{claim:q-and-q'}.


If $v'$ is movable to $V_3$, then move $v'$ to $V_3$ and move the witness from $V_3$ to $V_1$. This results in a new equitable $r$-coloring with $V_2\setminus \{v'\}$ being the unique small class. Clearly,
every class in $\mathcal{B}$ containing solo neighbors of $v'$ is accessible to $V_2\setminus \{v'\}$, and there are $q(v')\geq 6$ such classes. Now, we have a new $a$ value of at least 7, contradicting the maximality of $a$. Therefore, $v'$ has at least one neighbor in $V_3$.

Let $N_1$ be the number of classes in $\mathcal{B}$ containing exactly one neighbor of $v'$ and let $N_2$ be the number of classes in $\mathcal{B}$ containing at least two neighbors of $v'$.
If $|N_1|+|N_2|=|\mathcal{B}|$, then (recall that we had shown that $v'$ has at least one neighbor in both $V_1$ and $V_3$)
\[r\geq d_H(v')\geq 2+N_1+2N_2=r-1+N_2,\]
implying $N_2\leq 1$. However, $N_2\geq \lceil \frac{q'(v')}{2}\rceil\geq 2$ by Claim \ref{claim:solo-at-least-2-neighbors}, a contradiction. Therefore, $|N_1|+|N_2|<|\mathcal{B}|$, meaning that there exists a class $V_j\in \mathcal{B}$ such that $V_j$ 
has no neighbor of $v'$. 

Now we are able to close this case. Since $\mathcal{D}[\mathcal{B}]$ is a strong component by Claim \ref{claim:strong-component}, $Q'(v')=\emptyset$ by Claim \ref{claim:position-of-nicesolo-neighbor}. This contradicts the fact that $q'(v')\geq 4$.


\vspace{3mm}
\noindent \textbf{Case 2:} $a=2$
\vspace{3mm}

Let $\mathcal{A}=\{V_1,V_2\}$ and $\mathcal{B}=\{V_3,V_4,\ldots,V_r\}$. Now, $\overrightarrow{V_2V_1}\in E(\mathcal{D})$.

Let $v$ be a vertex in $V_2$ that are movable to $V_1$. We move $v$ to $V_1$ and obtain an equitable $r$-coloring with $V_2\setminus \{v\}$ being the unique small class.
If $v$ has a solo neighbor $u$, then the class containing $u$ is accessible to $V_2 \setminus \{v\}$.
Trivially, $V_1\cup \{v\}$ is also accessible to $V_2 \setminus \{v\}$.  Therefore, the new $a$ value is at least 3, a contradiction to the maximality of $a$. This implies $q(v)=0$ and that every vertex $u\in V_2$ with $q(u)\geq 1$ has at least one neighbor in $V_1$. 
Furthermore, $f_{\emptyset}(v) \leq \frac{1}{2} d_{H}(v)\leq \frac{1}{2}r<r-2$ because every neighbor of $v$ in $B$ has at least two neighbors in $V_2$.

Since 
\[\frac{\sum_{u\in V_2} f_{\emptyset}(u)}{|V_2|}=\frac{s(r-2)}{s}=r-2\]
by \eqref{eq:sum},
there exists $v'\in V_2$ such that \[f_{\emptyset}(v')>r-2.\]
This suggests that $v'$ cannot be moved to $V_1$, indicating $v'$ possesses at least one neighbor in $V_1$. On the other hand,
\[f_{\emptyset}(v')\leq \frac{1}{2}\bigg(d_H(v')-1-q(v')\bigg)+q(v') \leq \frac{1}{2} (q(v')+r-1).\]
It follows $q(v')>r-3 \geq 7$ and thus $q'(v')\geq 6$ by Claim \ref{claim:q-and-q'}.

By Claim \ref{claim:strong-component}, $\mathcal{D}[\mathcal{B}]$ has a strong component $\mathcal{D}[\mathcal{C}]$ containing at least $r-3$ classes. Without loss of generality, assume $\{V_4,V_5,\ldots,V_r\}\subseteq \mathcal{C}$.
Let $N_1$ be the number of classes in $\mathcal{B}$ containing exactly one neighbor of $v'$ and let $N_2$ be the number of classes in $\mathcal{B}$ containing at least two neighbors of $v'$.
If $|N_1|+|N_2| \geq |\mathcal{B}|-1$, then by Claim \ref{claim:solo-at-least-2-neighbors} (recall that we had shown that $v'$ has at least one neighbor in both $V_1$)
\[r\geq d_H(v')\geq 1+N_1+2N_2\geq |\mathcal{B}|+N_2 \geq |\mathcal{B}|+\bigg\lceil \frac{q'(v')}{2}\bigg\rceil \geq (r-2)+3=r+1,\]
a contradiction. Therefore, $|N_1|+|N_2| \leq |\mathcal{B}|-2=r-4$. This suggests that within $\{V_4,V_5,\ldots,V_r\}$, there exist at least one class, say $V_r$, that does not contain any neighbors of $v'$.
It follows $Q'(v')\subseteq V_3$ by Claim \ref{claim:position-of-nicesolo-neighbor}.


Now,
\[\frac{\sum_{u\in V_2} f_{V_3}(u)}{|V_2|}=\frac{(r-2-0.5)s}{s}=r-2.5\]
by \eqref{eq:sum}, 
\begin{align*}
    f_{V_3}(v')&\leq 0.5q'(v')+\left(q(v')-q'(v')\right)+0.5(r-q(v'))\\
               &=0.5(q(v')-q'(v')+r)\leq 0.5(2+r)<r-2.5
\end{align*}
by Claim \ref{claim:q-and-q'}, and
\[ f_{V_3}(v) \leq f_{\emptyset}(v) \leq \frac{1}{2} d_{H}(v)\leq \frac{1}{2}r<r-2.5\] because $q(v)=0$.
Therefore, there is a vertex $v''\in V_2\setminus \{v,v'\}$ such that \[f_{V_3}(v'')>r-2.5.\]
Moreover, $q(v_3)\geq 1$ because otherwise $f_{V_3}(v'')\leq f_{\emptyset}(v'')\leq 0.5d_{H}(v'')\leq 0.5r<r-2.5$.
This further implies that $v''$ has at least one neighbor in $V_1$.
On the other side, let $m=|Q(v'')\setminus V_3|$. We have
\[f_{V_3}(v'')\leq \frac{1}{2} \bigg(d_H(v'')-1-m\bigg)+m\leq \frac{1}{2}(r+m-1).\]
Therefore, we deduce $q(v'')\geq m>r-4\geq 6$ and thus $q'(v'')\geq 5$ by Claim \ref{claim:q-and-q'}.


Let $N_1$ denote the number of classes in $\mathcal{B}$ with exactly one neighbor of $v''$, and $N_2$ the number with at least two neighbors of $v''$.
If $|N_1|+|N_2| \geq |\mathcal{B}|-1$, then by Claim \ref{claim:solo-at-least-2-neighbors} (given $v''$ has a neighbor in both $V_1$),
\[
r\geq d_H(v'')\geq 1+N_1+2N_2\geq |\mathcal{B}|+N_2 \geq |\mathcal{B}|+\bigg\lceil \frac{q'(v'')}{2}\bigg\rceil \geq (r-2)+3=r+1,
\]
a contradiction. Thus, $|N_1|+|N_2| \leq |\mathcal{B}|-2=r-4$. This implies $\{V_4,V_5,\ldots,V_r\}$ contains at least one class with no neighbors of $v''$. 

By Claim \ref{claim:position-of-nicesolo-neighbor}, we have $Q'(v'') \subseteq V_3$. However, it is not the case because otherwise
\[f_{V_3}(v'')\leq 0.5q'(v'')+\bigg(r-1-q'(v'')\bigg)=r-1-0.5q'(v'')\leq r-3.5<r-2.5,\]
a contradiction.

\vspace{3mm}
\noindent \textbf{Case 3:} $a=1$
\vspace{3mm}

By Claim \ref{claim:q-and-q'}, we have
\[\frac{\sum_{u\in V_1} f_{\emptyset}(u)}{|V_1|}=\frac{(r-1)s}{s-1}>r-1,\]
so there is a vertex $v\in V_1$ such that
\[f_{\emptyset}(v)>r-1.\]
On the other hand,
\[f_{\emptyset}(v)\leq \frac{1}{2}\bigg(d_H(v)-q(v)\bigg)+q(v) \leq \frac{1}{2} (q(v)+r).\]
It follows $q(v) \geq r-1\geq 9$ and thus $q'(v)\geq 7$  by Claim \ref{claim:q-and-q'}.

We assume, by Claim \ref{claim:strong-component}, that $\mathcal{C}=\{V_4,V_5,\ldots,V_r\}$ is contained in a strong component of $\mathcal{D}[\mathcal{B}]$.
Let $N_1$ denote the number of classes in $\mathcal{B}$ with exactly one neighbor of $v$, and $N_2$ the number with at least two neighbors of $v$.
If $|N_1|+|N_2| \geq |\mathcal{B}|-2$, then by Claim \ref{claim:solo-at-least-2-neighbors},
\[
r\geq d_H(v)\geq N_1+2N_2\geq |\mathcal{B}|-2+N_2 \geq |\mathcal{B}|-2+\bigg\lceil \frac{q'(v)}{2}\bigg\rceil \geq (r-3)+4=r+1,
\]
a contradiction. Thus, $|N_1|+|N_2| \leq |\mathcal{B}|-3=r-4$. This indicates $\mathcal{B}$ has at least three classes with no neighbors of $v$. 
Now, we have 
$Q'(v) \subseteq V_2\cup V_3$ by Claim \ref{claim:position-of-nicesolo-neighbor}.

Let $\mathcal{W}=\{V_2,V_3\}$. Since
\[\frac{\sum_{u\in V_1} f_{\mathcal{W}}(u)}{|V_1|}=\frac{(r-1)s-0.5(2s)}{s-1}>r-2\]
by \eqref{eq:sum}, and
\begin{align*}
    f_{\mathcal{W}}(v)&\leq 0.5q'(v)+\left(q(v)-q'(v)\right)+0.5(r-q(v))\\
               &=0.5(q(v)-q'(v)+r)\leq 0.5(2+r)<r-2,
\end{align*}
there exists $v'\in V_1\setminus \{v\}$ such that $f_{\mathcal{W}}(v')>r-2$.

Let $m=|Q(v')\setminus \mathcal{W}|$. Then, we have
\[f_{\mathcal{W}}(v')\leq \frac{1}{2} \bigg(d_H(v'')-m\bigg)+m\leq \frac{1}{2}(r+m).\]
Therefore, we deduce $q(v')\geq m>r-4\geq 6$ and thus $q'(v')\geq 5$ by Claim \ref{claim:q-and-q'}.

If at least five  vertices of $Q'(v')$ are in $\mathcal{W}$, then \[f_{\mathcal{W}}(v')\leq d_H(v')-0.5(5)\leq r-2.5<r-2,\] a contradiction.
So, at least one vertex of $Q'(v')$ is in some class of $\mathcal{B}\setminus \mathcal{W}$.

Let $N_0$ denote the number of classes in $\mathcal{B}$ with no neighbor of $v'$, $N_1$ the number of classes in $\mathcal{B}$ with exactly one neighbor of $v'$, and $N_2$ the number with at least two neighbors of $v'$. We have
\[
r\geq d_H(v')\geq N_1+2N_2\geq |\mathcal{B}|-N_0+N_2 \geq |\mathcal{B}|-N_0+\bigg\lceil \frac{q'(v')}{2}\bigg\rceil
\]
by Claim \ref{claim:solo-at-least-2-neighbors}, implying
\begin{align*}
    N_0 \geq \bigg\lceil \frac{q'(v')}{2}\bigg\rceil-1.
\end{align*}

If some class of $\mathcal{B} \setminus \mathcal{W}$ contains no neighbor of $v'$, then $Q'(v')\subseteq \mathcal{W}$ by Claim \ref{claim:position-of-nicesolo-neighbor}, contradicting the fact that $Q'(v')\cap \mathcal{B}\setminus \mathcal{W} \neq \emptyset$ we had just proved.  Therefore, $N_0\leq 2$, implying $q'(v')\leq 6$. On the other hand, since $q'(v')\geq 5$, we have $N_0\geq 2$. It follows $N_0=2$ and 
\[N_H(v')\cap (V_2\cup V_3)=\emptyset.\]

Since $m\geq r-3\geq 7$, there exists $z\in \left(Q(v') \setminus \mathcal{W}\right) \setminus Q'(v')$. Assume by symmetry that $z\in V_r$.
Since $z\not\in Q'(v')$, $z$ is adjacent to all other vertices in $Q(v') \setminus \mathcal{W}$. It follows that $z$ has at least $r-4$ neighbors in $\mathcal{B} \setminus \mathcal{W}$. Since $z$ has a neighbor $v'$ in $V_1$, $z$ has at most three neighbors in $\mathcal{W}$.

Given that $Q'(v) \subseteq \mathcal{W}$ and $q'(v) \geq 7$, $Q'(v)$ contains at least four vertices non-adjacent to $z$. By Pigeonhole, we choose two of them, namely $z_1$ and $z_2$, that are all in $V_2$.
Recall that $\mathcal{B}$ has at least three classes with no neighbors of $v$. We may assume $V_i$ has no neighbors of $v$ for some $4\leq i\leq r$. Since $V_i,V_r\in \mathcal{C}$, there is a directed path from $V_i$ to $V_r$ in $\mathcal{D}[\mathcal{B}]$ (if $i=r$, then we simply regard $P$ as an empty structure).

Now, we complete the proof in the following way. We move $v$ to $V_i$, $v'$ to $V_2$, $z$ and $z_1$ to $V_1$, and then switch witness along $P$. 
This creates a new equitable $r$-coloring of $H$ with the unique small class being $V_1 \cup \{z,z_1\} \setminus \{v,v'\}$. However, $V_2\cup \{v'\} \setminus \{z_1\}$ is accessible to $V_1 \cup \{z,z_1\} \setminus \{v,v'\}$ because of $z_2$ (note that $vz_2$ is a solo edge and $z_1z_2,zz_2\not\in E(H)$). This implies that the new $a$ value of at least 2, contradicting the maximality of $a$.

\section{Concluding Remarks}

The proof of Theorem \ref{thm:main} does not demonstrate significant dependence on the structural attributes of IC- or NIC-planar graphs, with the exception of the initial five lemmas presented in Section \ref{sec:xxlovecc-2}. It is noteworthy that IC-planar graphs are classified within the set $\mathcal{G}_{\frac{7}{2},\frac{9}{4}}$, while NIC-planar graphs fall under the category $\mathcal{G}_{\frac{18}{5},\frac{5}{2}}$. Employing a nearly identical proof methodology, we can establish the subsequent results:

\begin{thm} \label{I-love-u-809}
    If $G\in \mathcal{G}_{\frac{7}{2},\frac{9}{4}}$ and $\Delta(G)\geq 10$, then $G$ has an equitable $r$-coloring for every $r\geq \Delta(G)$.
\end{thm}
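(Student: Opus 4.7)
My plan is to imitate the proof of Theorem \ref{thm:main} in the IC-planar case, since membership in $\mathcal{G}_{\frac{7}{2},\frac{9}{4}}$ gives precisely the density inequalities used there: every subgraph $H$ satisfies $e(H)\le \frac{7}{2}v(H)$ and every bipartite subgraph satisfies $e(H)\le \frac{9}{4}v(H)$. These bounds differ from Lemmas \ref{lem:size-IC} and \ref{lem:size-bipIC} only by the absence of the additive constants $-7$ and $-4$, and the strict inequalities used throughout the proof survive this slight weakening.

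First I would apply Lemma \ref{love} to reduce to graphs with $r\mid v(G)$ and induct on $|E(G)|$. A subtlety arises at the degeneracy step: $e\le \frac{7}{2}v$ only forces $7$-degeneracy, not $6$-degeneracy. If the current graph has a vertex of degree at most $6$, I would pick $xy$ with $d_G(x)\le 6$ and proceed as in the original proof. Otherwise the sparsity condition combined with minimum degree $\ge 7$ forces $G$ to be $7$-regular, so $\Delta(G)=7<10\le r$ and the Hajnal--Szemer\'edi theorem supplies the equitable $r$-coloring directly. After removing $xy$ and passing to $H=G-x$ with an equitable $r$-coloring $\varphi$ for which $\varphi(x)=\varphi(y)$, Claims \ref{claim:a<=3} and \ref{claim:strong-component} transfer cleanly, because the key strict inequalities $a(r-a)>\frac{9}{4}r$ and $z(r-z)>\frac{9}{4}r$ hold for the relevant values of $a$ and $z$ when $r\ge 10$; Claims \ref{claim:solo-at-least-2-neighbors} and \ref{claim:position-of-nicesolo-neighbor} make no use of density at all.

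The hard part will be Claim \ref{claim:q-and-q'}. The IC-planar argument exploits the stronger bound $e\le \frac{7}{2}(q+1)-7$ on $H[u\cup Q(u)]$ to rule out $K_7$ and force $q(u)\le 5$ whenever $q'(u)\le q(u)-3$; in our setting $K_7$ is permitted, so this claim can fail as stated. To adapt it, I would pivot to the bipartite density bound by restricting to the (complete) bipartite subgraph between $\{u\}\cup (Q(u)\setminus Q'(u))$ and $Q'(u)$, which carries $q'(u)(q(u)-q'(u)+1)$ edges on $q(u)+1$ vertices; this yields a variant of the form ``$q(u)\ge 9\Rightarrow q'(u)\ge q(u)-2$''. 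The resulting higher threshold must then be matched inside the three case analyses by sharpening the $f_{\mathcal{W}}$-based derivations of $q(v')$ from $q(v')\ge 6$ up to $q(v')\ge 9$, which I expect to be possible by choosing $\mathcal{W}$ to incorporate the extra accessible-class neighbor of $v'$ already guaranteed in each case. This careful bookkeeping will be the main technical work.
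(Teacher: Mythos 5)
Your overall strategy coincides with the paper's: the paper establishes this theorem by asserting that the proof of Theorem \ref{thm:main} depends on the structure of IC-planar graphs only through the density lemmas, and hence transfers to $\mathcal{G}_{\frac{7}{2},\frac{9}{4}}$ with essentially no change. You go further than the paper in two useful ways. Your observation that a graph in the class with minimum degree at least $7$ must be $7$-regular (so Hajnal--Szemer\'edi applies, as $r\geq 10>8$) cleanly disposes of the degeneracy issue, and restores the step of Claim \ref{claim:a<=3} that needs $d_G(x)\leq 6$. More importantly, you correctly notice that the proof of Claim \ref{claim:q-and-q'} genuinely uses the additive constant $-7$ of Lemma \ref{lem:size-IC}: without it, $4q(u)-6\leq 3.5(q(u)+1)$ only yields $q(u)\leq 19$, and since $K_7$ and $K_8$ lie in $\mathcal{G}_{\frac{7}{2},\frac{9}{4}}$, configurations with $q(u)\in\{6,7\}$ and $q'(u)=0$ are not excluded. (One correction: the bipartite bound alone does not give your variant, since the bipartite subgraph you describe is empty when $q'(u)=0$; you must combine it with the non-bipartite bound applied to the clique on $\{u\}\cup(Q(u)\setminus Q'(u))$, after which the stated variant ``$q(u)\geq 9\Rightarrow q'(u)\geq q(u)-2$'' does follow.)

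The genuine gap is the final step, which you defer as ``the main technical work'' and which does not close as described. In Case 1 the counting gives $f_{\emptyset}(v')\leq\frac{1}{2}(q(v')+r-1)$, hence only $q(v')\geq r-4$, which is $6$ at $r=10$; even crediting $v'$ with an additional neighbor in $V_3$ improves this only to $r-3=7$, far short of the threshold $9$ that your adapted claim requires, and at $q(v')=7$ the configuration $H[\{v'\}\cup Q(v')]=K_8$ (which meets $e=28=3.5\cdot 8$ and all bipartite constraints of the class) still permits $q'(v')=0$, destroying the contradiction that ends Case 1. Your proposed remedy---choosing $\mathcal{W}$ to incorporate the extra accessible-class neighbor of $v'$---cannot work, because $\mathcal{W}$ is by definition a subset of $\mathcal{B}$ and $f_{\mathcal{W}}$ sums only over $N_B(v')$, so neighbors of $v'$ in $V_1$ or $V_3$ never enter the weight function at all. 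The same mismatch recurs in Cases 2 and 3. To be fair, the paper supplies no details here either, so the difficulty you uncovered is a real one in the claimed extension; but your proposal identifies it without resolving it, and a new replacement for Claim \ref{claim:q-and-q'} (or a different way to force $q'(v')$ large) is still needed.
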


\begin{thm} \label{I-love-u-829}
    If $G\in \mathcal{G}_{\frac{18}{5},\frac{5}{2}}$ and $\Delta(G)\geq 11$, then $G$ has an equitable $r$-coloring for every $r\geq \Delta(G)$.
\end{thm}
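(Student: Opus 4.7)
The plan is to rerun the proof of Theorem~\ref{thm:main} for the NIC-planar case with three global substitutions: (i) wherever Lemma~\ref{lem:size-NIC} was used, substitute the (weaker but universally valid) $e(H)\le(18/5)v(H)$ from the definition of $\mathcal{G}_{18/5,5/2}$; (ii) wherever Lemma~\ref{lem:size-bipNIC} was used, substitute $e(H)\le(5/2)v(H)$; (iii) wherever Lemma~\ref{lem:6-ge-NIC} was used, use the $7$-degeneracy that follows from $e(H)\le(18/5)v(H)$ (average degree at most $7.2$). In particular the edge-removal base step picks $xy$ with $d_G(x)\le 7$ instead of $\le 6$, and the remaining scaffolding (induction via Lemma~\ref{love}, and the definitions of $\mathcal{A}$, $\mathcal{B}$, $a$, $b$, accessibility, terminals, and the weight function $f_\mathcal{W}$) is unchanged.

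The preparatory claims then need only arithmetic adjustments. Claim~\ref{claim:a<=3} must rule out $a\in\{4,5,6,7\}$ (instead of $\{4,5,6\}$), but $2.5r-a(r-a)<0$ still holds for $r\ge 11$ throughout that range, which contradicts the bipartite bound $e_H(A,B)\le(5/2)(rs-1)$, so $a\le 3$ still follows. Claim~\ref{claim:strong-component} is re-verified by the identical calculation with $z$ in place of $a$. Claims~\ref{claim:solo-at-least-2-neighbors} and~\ref{claim:position-of-nicesolo-neighbor} are purely structural and require no change.

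The main obstacle is Claim~\ref{claim:q-and-q'}. Without the $-7.2$ slack in the NIC-planar edge bound, the same counting argument delivers only $\binom{q'(u)}{2}\ge\binom{q(u)+1}{2}-(18/5)(q(u)+1)$, which translates to $q(u)\ge 8\Rightarrow q'(u)\ge 4$, $q(u)\ge 9\Rightarrow q'(u)\ge 5$, $q(u)\ge 10\Rightarrow q'(u)\ge 7$, and $q(u)\ge 11\Rightarrow q'(u)\ge 8$: each about one unit weaker than the NIC-planar version. To compensate, in Cases~1 and~2 I would sharpen the derived lower bound on $q$ at each critical vertex by one, by exploiting an extra non-$B$ neighbor already guaranteed by the case structure (for instance in Case~1, $v'\in V_2$ has neighbors in both $V_1$ and $V_3$, improving $f_\emptyset(v')\le \tfrac12(q(v')+r-2)$ and upgrading $q(v')\ge r-4$ to $q(v')\ge r-3$). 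Feeding the refined lower bounds into the adjusted Claim~\ref{claim:q-and-q'} restores the required values of $q'$.

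The hardest step is Case~3. There the auxiliary vertex $v'\in V_1$ has all its neighbors in $B$, so no free non-$B$ neighbor is available, and $q(v')\ge r-3\ge 8$ only delivers $q'(v')\ge 4$ under the general bound, one short of the value $q'(v')\ge 5$ used in Theorem~\ref{thm:main}. The remedy is a more delicate endgame: the slack $q'(v)\ge 7$ (from $q(v)\ge r-1\ge 10$) still supplies many nice solo neighbors of $v$ in $\mathcal{W}=V_2\cup V_3$, and by a sub-case split based on the value of $N_0\in\{1,2\}$ and on which of $V_2,V_3$ contains no neighbor of $v'$, one can still extract $z\in(Q(v')\setminus\mathcal{W})\setminus Q'(v')$ and a pair $z_1,z_2$ in the appropriate class non-adjacent to $z$. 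The numerical margins at $r=11$ are extremely tight and this bookkeeping is the principal technical difficulty; beyond it, no conceptual idea beyond those of Theorem~\ref{thm:main} is required.
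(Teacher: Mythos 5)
Your overall plan coincides with the paper's: the paper proves this theorem simply by asserting that the argument for Theorem~\ref{thm:main} carries over nearly verbatim, adding only the remark that the $7$-degeneracy of $\mathcal{G}_{\frac{18}{5},\frac{5}{2}}$ still suffices for Claim~\ref{claim:a<=3}. You go further than the paper in noticing that the additive constants $-7.2$ and $-5$ in Lemmas~\ref{lem:size-NIC} and~\ref{lem:size-bipNIC} are genuinely used and are not supplied by the definition of $\mathcal{G}_{\frac{18}{5},\frac{5}{2}}$, and that the casualty is Claim~\ref{claim:q-and-q'}: with only $e(H)\le 3.6\,v(H)$ the count gives $\binom{q'(u)}{2}\ge\binom{q(u)+1}{2}-3.6(q(u)+1)$, so the implication $q'(u)\ge q(u)-2$ is lost and, for instance, $q(u)\ge 8$ yields only $q'(u)\ge 4$. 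That diagnosis is correct and is a real issue the paper's one-line justification does not address.

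However, the repair is not carried out, and at the two places where it matters most it does not work as described. First, in Case~2 the third critical vertex $v''$ has no ``extra non-$B$ neighbor'' to harvest: when $a=2$ the only class of $\mathcal{A}$ other than $V_2$ is $V_1$, and the single guaranteed neighbor of $v''$ in $V_1$ is already subtracted in the bound $f_{V_3}(v'')\le\frac{1}{2}(r+m-1)$. So at $r=11$ you still get only $m\ge r-3=8$, hence $q'(v'')\ge 4$ and $\lceil q'(v'')/2\rceil=2$, and the chain $r\ge d_H(v'')\ge 1+N_1+2N_2\ge|\mathcal{B}|+\lceil q'(v'')/2\rceil$ gives $r\ge r$ rather than $r\ge r+1$: no contradiction, so you cannot produce the neighbor-free class in $\{V_4,\dots,V_r\}$ needed to invoke Claim~\ref{claim:position-of-nicesolo-neighbor} and conclude $Q'(v'')\subseteq V_3$. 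Second, in Case~3, with $q'(v')\ge 4$ you get only $N_0\ge\lceil q'(v')/2\rceil-1=1$, so you cannot conclude $N_H(v')\cap(V_2\cup V_3)=\emptyset$; if exactly one of $V_2,V_3$ is free of neighbors of $v'$, the final exchange requires two nice solo neighbors $z_1,z_2$ of $v$ lying in that particular class and non-adjacent to $z$, whereas the pigeonhole argument only guarantees such a pair in one of $V_2,V_3$ of its own choosing. You flag Case~3 as the principal difficulty but do not resolve it; as written, the proposal is a plan with a genuine gap precisely at the steps where the missing additive constants bite.
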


We emphasize that, although graphs in the class $\mathcal{G}_{\frac{18}{5},\frac{5}{2}}$ exhibit $7$-degeneracy, NIC-graphs specifically possess $6$-degeneracy by Lemma \ref{lem:6-ge-NIC}. Nevertheless, the $7$-degeneracy property of $\mathcal{G}_{\frac{18}{5},\frac{5}{2}}$ suffices for the validity of Claim \ref{claim:a<=3}. 
Moreover, through more meticulous computations in the proofs of Claims \ref{claim:a<=3} and \ref{claim:q-and-q'}, we are able to slightly extend the scope of Theorem \ref{I-love-u-809} to encompass graphs $G$ belonging to the class $\mathcal{G}_{\frac{7}{2},\frac{12}{5}}$, and similarly, Theorem \ref{I-love-u-829} can be extended to cover graphs $G$ within the class $\mathcal{G}_{\frac{18}{5},\frac{28}{11}}$.

\bibliographystyle{alpha}
\bibliography{ref,ref-2}

\end{document}